\numberwithin{equation}{section}
\newtheorem{theorem}{Theorem}[section]
\newtheorem{lemma}[theorem]{Lemma}
\newtheorem{prop}[theorem]{Proposition}
\theoremstyle{definition}
\newtheorem{defi}{Definition}[section]
\newtheorem{rem}[defi]{Remark}
\newcommand\half{\frac{1}{2}}
\newcommand\ov{\overline}
\newcommand\g{\mathfrak g}
\newcommand\h{\mathfrak h}
\newcommand\n{\mathfrak n}
\newcommand\bb{\mathfrak b}
\newcommand\D{\Delta}
\renewcommand\l{\lambda}
\newcommand\Dp{\Delta^+}
\renewcommand\d{\delta}
\renewcommand\a{\alpha}
\renewcommand\aa{\mathfrak a}
\renewcommand\b{\bb}
\renewcommand\i{{\mathfrak i}}
\newcommand\ganz{\mathbb Z}
\newcommand\s{\sigma}
\renewcommand\L{\Lambda}
\renewcommand\aa{\mathfrak a}
\newcommand\la{\lambda}
\newcommand\C{\mathbb C}
\newcommand\R{\mathbb R}
\newcommand\si{\sigma}
\newcommand\va{|0\rangle}
\newcommand{\sdim}{\text{\rm sdim}}
\begin{document}
\title[Very Strange Formula for  Lie superalgebras]{Dirac operators and the Very Strange Formula for  Lie superalgebras}
\author[Kac, M\"oseneder and Papi]{Victor~G. Kac\\Pierluigi M\"oseneder Frajria\\Paolo  Papi}

\begin{abstract}
Using a super-affine version of Kostant's cubic Dirac operator, we prove a very strange formula for quadratic  finite-dimensional Lie superalgebras with a reductive even subalgebra.
\end{abstract}
\maketitle
\section{Introduction}
The goal of this paper is to provide an approach to  the strange and very strange formulas  for a wide class of finite dimensional Lie superalgebras. Let us recall what these formulas are in the even case.
Let $\g$ be a finite-dimensional complex simple Lie algebra. Fix a Cartan subalgebra $\h\subset\g$ and let $\Dp$ be a  set of positive roots for the set $\D$ of $\h$-roots in $\g$. Let $\rho=\tfrac{1}{2}\sum_{\a\in\Dp}\a$ be the corresponding Weyl vector. Freudenthal and de Vries discovered  in \cite{F} the following remarkable relation between the square length of $\rho$ in the Killing form $\kappa$ and the dimension of $\g$:
\begin{equation*}\label{fsf}\kappa(\rho,\rho)=\frac{\dim\g}{24}.\end{equation*}
They called this the {\it strange formula}. It can be proved in several very different ways (see e.g. \cite{FS}, \cite{B}), and it plays an important role in  the proof of   the Macdonald identities. Indeed, the very strange formula enters as a transition factor between the Euler product $\varphi(x)=\prod\limits_{i=1}^\infty(1-x^i)$ and 
Dedekind's $\eta$-function $\eta(x)=x^{\frac{1}{24}}\varphi(x)$.\par In \cite{Kaceta1} Kac gave  a representation theoretic interpretation of the  Macdonald identities as  denominator identities for an affine Lie algebra. Moreover, 
using the modular properties of characters of the latter algebras,  æ he provided in \cite{Kaceta} a multivariable generalization of them and a corresponding transition identity that he named the
{\it very strange formula}. Here the representation theoretic interpretation of the formulas involves an  affine Lie algebra, which is built up from  a simple Lie algebra endowed with a  finite order automorphism. To get the  very strange formula from a ``master formula'' it was also required that  the  characteristic polynomial of the automorphism has rational coefficients. A more general 
form, with no  rationality hypothesis, is proved in \cite{KP}, where it is also used to estimate  the asymptotic behavior at cusps of the modular forms involved in the character of an highest weight module. 
Let us state this version of the very strange formula, for simplicity of exposition just in the case of  inner automorphisms.
Let  $\sigma$ be an automorphism of order $m$ of type 
$(s_0,s_1,\ldots,s_n;1)$ (see \cite[Chapter 8]{Kac}). Let 
$\g=\oplus_{\bar j\in\ganz/m\ganz}\,\g^{\ov j}$ be the eigenspace 
decomposition with respect to $\s$. Define  $\l_s\in\h^*$
by $\kappa(\l_s,\a_i)=\frac{s_i}{2m},\,1\leq i\leq n$; here $\{\a_1,\ldots,\a_n\}$ is the set of simple roots of $\g$. Then 
\begin{equation}\label{vsfk} \kappa(\rho-\l_s,\rho-\l_s)=
\frac{\dim\g}{24}-\frac{1}{4m^2}\sum_{j=1}^{m-1}j(m-j)\dim \g^{\ov j}.
\end{equation}

Much more recently, we provided a vertex algebra approach to this formula (in a slight generalized version where an  elliptic automorphism is considered, cf. \cite{KMP}) æas a byproduct of our attempt to reproduce Kostant's theory of the cubic Dirac operator in  affine 
setting. Our proof is based on two main ingredients:
\begin{enumerate} 
\item[(a).] An explicit vertex algebra isomorphism $V^k(\g)\otimes F(\overline \g)\cong V^{k+g,1}(\g)$, where $V^k(\g)$ is the affine vertex algebra of noncritical level $k$ and  $F(\overline \g)$ is the fermionic 
vertex algebra of $\g$ viewed as a purely odd space.
\item[(b).] A nice formula for the $\l$-bracket of the Kac-Todorov Dirac field $G\in V^{k+g,1}(\g)$ with itself.
\end{enumerate}
\par
Indeed, using (a), we can let the zero mode $G_0$ of $G$ act on the tensor product of representations of $V^k(\g), F(\ov\g)$. Since we are able to compute $G_0\cdot(v\otimes 1)$, where $v$ is an  highest
weight vector of an highest weight  module for the affinization of $\g$, the expression for $[G_\l G]$ obtained in step  (b) yields a formula which can be  recast  in the form
\eqref{vsfk}æ (cf. \cite[Section 6]{KMP}).
\par
Now we discuss our work in the super case. A finite dimensional  Lie superalgebra $\g=\g_0\oplus\g_1$ is called quadratic if it carries a supersymmetric bilinear form (i.e. symmetric on $\g_0$, 
skewsymmetric on $\g_1$, and $\g_0$ is orthogonal to $\g_1$), which is 
non-degenerate and invariant. We say that a  complex quadratic  Lie superalgebra $\g$ is of {\it basic type} if  $\g_0$ is a reductive subalgebra of $\g$. 
In Theorem \ref{sf}   we  prove a {\it very strange formula} (cf. \eqref{vstrangeformula}) for basic type Lie superalgebras endowed with an indecomposable elliptic   automorphism (see Definition \ref{de})
which preserves the invariant form. When the automorphism is the identity, this formula specializes to the strange formula \eqref{strangeformula}, which has been proved for Lie superalgebras of defect  zero in \cite{Kaceta}æ and for general basic classical Lie superalgebras in 
\cite{KW}, using case by case combinatorial calculations. The proof using the Weyl character formula as in \cite{F}æ or the proof using modular forms as in \cite{KP} are not applicable in this setting.

Although the proof proceeds along the lines of what we did in  \cite{KMP} for Lie algebras, we have to face  several technical difficulties. We single out two of them. First, we have to 
build up a twisted Clifford-Weil  module for $F(\ov\g)$; this requires a careful  choice of a maximal isotropic subspace in $\g^{\ov 0}$.
In Section 2  we prove that the class of  Lie superalgebras of basic type  is closed under taking fixed points of automorphisms and in Section 3 we show that  Lie superalgebras of basic type  admit a triangular decomposition. This implies the existence a ``good'' maximal isotropic subspace.\par
Secondly, the isomorphism in (a) is given by  formulas which are different w.r.t. the even case, and this makes  subtler the computation of the  square of the Dirac field under $\l$-product. We  have also obtained several simplifications with respect to the exposition given in \cite{KMP}.
\par
Some of our results have been (very  sketchily) announced in \cite{P}.

\section{Setup}\label{2} Let $\g=\g_0\oplus\g_1$ be a finite dimensional  Lie superalgebra of basic type, i.e., 
\begin{enumerate}\item
$\g_0$ is reductive subalgebra of $\g$, i.e., the adjoint representation of $\g_0$ on $\g$ is completely reducible;
\item
$\g$ is {\it quadratic}, i.e., $\g$ admits a  nondegenerate invariant supersymmetric bilinear form  $(\cdot,\cdot)$. 
\end{enumerate}
Note that  condition (1) implies
that $\g_0$ is a reductive Lie algebra and that 
$\g_1$ is completely reducible as a $\g_0$-module. 
Examples are given by the  simple basic classical Lie superalgebras and the contragredient finite dimensional  Lie superalgebras with a symmetrizable Cartan matrix (in particular, $gl(m,n)$). There are of course examples of different kind, like a symplectic vector space regarded as a purely odd abelian Lie superalgebra. An inductive classification is provided 
in \cite{Be}. 
 \par
We say that $\g$ is $(\cdot,\cdot)$-irreducible if the form restricted to a proper ideal is dege\-nerate.
\begin{defi}\label{de}
An automorphism $\s$ of $\g$ is said indecomposable if $\g$ cannot be decomposed as an orthogonal  direct sum of two nonzero $\s$-stable ideals. \par
We say that $\s$ is elliptic if it  is diagonalizable with modulus $1$ eigenvalues. 
\end{defi}

 Let $\sigma$ be an indecomposable elliptic automorphism of $\g$ which is parity preserving and leaves the form invariant. 
  If $j\in\R$, set $\bar j=j +\ganz\in\R/\ganz$. Set $\g^{\bar j}=\{x\in\g\mid \sigma(x)=e^{2\pi\sqrt{-1}j}x\}$. Let $\h^0$ be a  Cartan subalgebra of $\g^{\ov0}$. 

  \begin{prop}\label{isreductive} If $\g$ is of basic type, then 
  $\g^{\ov 0}$ is of basic type.
    \end{prop}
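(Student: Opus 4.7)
The plan is to verify for $\g^{\ov 0}$ the two defining conditions of basic type: nondegeneracy of the restricted invariant form, and reductivity of $\g_0^\si:=\g_0\cap\g^{\ov 0}=(\g^{\ov 0})_0$ in $\g^{\ov 0}$. The form part is routine: $\si$-invariance together with ellipticity forces $(x,y)=e^{2\pi\sqrt{-1}(j+k)}(x,y)$ for $x\in\g^{\ov j}$, $y\in\g^{\ov k}$, so distinct eigenspaces pair to zero unless $\bar j+\bar k=\bar 0$. Hence $\g^{\ov 0}$ is orthogonal to $\bigoplus_{\bar j\ne\bar 0}\g^{\ov j}$, and nondegeneracy on $\g$ descends to $\g^{\ov 0}$; supersymmetry, invariance and the super-decomposition $\g^{\ov 0}=\g_0^\si\oplus\g_1^\si$ pass to the restriction because $\si$ preserves parity.

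The delicate point, and what I expect to be the main obstacle, is complete reducibility of the adjoint action of $\g_0^\si$ on $\g^{\ov 0}$. Since $\si$ is merely elliptic and may have infinite order, no averaging over a finite group is available. I would sidestep this by passing to the compact closure $T:=\overline{\langle\si\rangle}\subseteq GL(\g)$: ellipticity places $\si$ in a compact torus of $GL(\g)$, making $T$ a compact abelian Lie group, and closedness of $\text{Aut}(\g)$ in $GL(\g)$ gives $T\subseteq\text{Aut}(\g)$. Continuity of the orbit map $t\mapsto tx$ on the finite-dimensional space $\g$ then yields $\g^\si=\g^T$, and similarly $\g_0^\si=\g_0^T$.

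The final step exploits the compactness of $T$. Integrate $\g_0$ to a connected complex reductive algebraic group $G_0$ on which $T$ acts by algebraic automorphisms (for instance, the adjoint group of $[\g_0,\g_0]$ times a torus for the center of $\g_0$). The fixed subgroup $G_0^T$ is closed with Lie algebra $\g_0^T=\g_0^\si$, and it is reductive because the elements of $T$ act semisimply: one reduces, via the identity component $T^0$ and the finite quotient $T/T^0$, to the standard facts that centralizers of tori and fixed points of finite groups of semisimple automorphisms in reductive groups are reductive. Because $T$ and $G_0^T$ commute in their actions on $\g$, the subspace $\g^{\ov 0}=\g^T$ is $G_0^T$-stable and carries a rational representation of the connected reductive group $(G_0^T)^0$, whose Lie algebra is precisely $\g_0^\si$. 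Weyl's complete reducibility theorem then furnishes a $\g_0^\si$-invariant complement to every $\g_0^\si$-submodule of $\g^{\ov 0}$, establishing condition~(1) for $\g^{\ov 0}$ and hence the proposition.
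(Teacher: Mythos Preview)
Your treatment of the form is the same as the paper's. For complete reducibility, however, the paper takes a much shorter, purely Lie-algebraic route that avoids integrating to groups altogether.

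The paper first invokes the standard fact that the fixed subalgebra of a semisimple automorphism of a reductive Lie algebra is reductive, so $\g_0^{\ov 0}$ is reductive outright. It then argues that $\g_1^{\ov 0}$ is a semisimple $\g_0^{\ov 0}$-module as follows: a Cartan subalgebra $\h^0$ of $\g_0^{\ov 0}$ is abelian, hence lies in a Cartan subalgebra $\h$ of $\g_0$; since $\g_1$ is by hypothesis a semisimple $\g_0$-module, $\h$ acts semisimply on $\g_1$, so $\h^0$ acts semisimply on $\g_1^{\ov 0}$. As the center of the reductive algebra $\g_0^{\ov 0}$ is contained in $\h^0$, this yields complete reducibility immediately.

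Your approach through the compact closure $T$ and the fixed-point group $G_0^T$ is sound in outline but carries more overhead and a few soft spots you would need to patch. The suggested model ``adjoint group of $[\g_0,\g_0]$ times a torus for $\mathfrak z(\g_0)$'' does not obviously inherit the $T$-action on its torus factor, nor does it obviously act on $\g_1$; working instead with the connected algebraic subgroup of $GL(\g)$ with Lie algebra $\ad(\g_0)$ fixes both issues. The reduction ``centralizer of a torus is reductive'' is a statement about algebraic tori, so you should pass to the Zariski closure of $\langle\sigma\rangle$ in $GL(\g)$ (a diagonalizable algebraic group) rather than the compact closure. Finally, the last step is complete reducibility of rational representations of reductive algebraic groups, not Weyl's theorem for semisimple Lie algebras. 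None of this is fatal, but the paper's Cartan-subalgebra argument bypasses all of it in two lines.
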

  \begin{proof}Since $\s$ is parity preserving, it induces an automorphism of $\g_0$. Since $\g_0$ is reductive, we have that   $\g^{\ov 0}_0$ is also reductive. Since $\s$ preserves the invariant bilinear  form, we have that  $(\g^{\ov i},\g^{\ov j})\ne 0$ if and only if $\ov i =-\ov j$. Thus $(\cdot,\cdot)_{|\g^{\ov 0}\times \g^{\ov 0}}$ is nondegenerate. Since $\g^{\ov 0}_0$ is reductive, $\h^0$ is abelian, thus it is contained in a Cartan subalgebra $\h$ of $\g$. Since $\g_1$ is completely reducible as a $\g_0$-module, $\h$ acts semisimply on $\g_1$, hence $\h^0$ acts semisimply on $\g^{\ov 0}_1$. Thus $\g^{\ov 0}_1$ is a semisimple $\g^{\ov 0}_0$-module.

  \end{proof}
  
It is well-known that we can choose as Cartan subalgebra for $\g$ the centralizer $\h$ of $\h^0$ in $\g_0$. 
In particular we have that $\s(\h)=\h$. If $\aa$ is any Lie superalgebra, we let $\mathfrak z(\aa)$ denote its center.

\section{The structure of basic type Lie superalgebras}

The goal of this section is to prove that a basic type Lie superalgebra admits a triangular decomposition. We will apply this result in the next sections to $\g^{\ov 0}$, 
which, by  Proposition \ref{isreductive}, is of basic type.\par
Since $\g_0$ is reductive, we can fix a Cartan subalgebra $\h\subset\g_0$ and a set of positive roots for $\g_0$.  If $\l\in\h^*$, let $h_\l$ be, as usual, the unique element of $\h$ such that $(h_\l,h)=\l(h)$ for all $h\in\h$.
Let $V(\l)$ denote the irreducible representation of $\g_0$ with highest weight $\l\in(\h)^*$.

Then  we can write 
\begin{equation}\label{1}
\g=\h+[\g_0,\g_0]\oplus\sum_{ \l\in\h^*} V(\l).
\end{equation}
 Decompose now the Cartan subalgebra $\h$ of $\g$ as
$$\h=\h'\oplus\h'',\qquad \h'=\h\cap[\g,\g].$$

Let $M_{triv}$ be the isotypic component of the trivial $[\g_0,\g_0]$-module in $\g_1$. 
Decompose  it into isotypic components for $\g_0$ as 
\begin{equation}\label{isotyp}
M_{triv}=\oplus_{\l\in\Lambda} M(\l)=M(0)\oplus M_{triv}',
\end{equation}
 where 
$M_{triv}'=\oplus_{0\ne \l\in\Lambda} M(\l)$. 
Then 
\begin{align}
\g^{(1)}&:=[\g,\g]\notag\\&=\h' + [\g_0,\g_0]\oplus\sum_{ 0\ne\l\in\h^*}V(\l)\notag\\
&=\h' + [\g_0,\g_0]\oplus M_{triv}'\oplus\sum_{\l\in\h^*,\dim V(\l)>1}V(\l)\label{guno}.
\end{align}

\begin{lemma}\label{unid} If in decomposition \eqref{1} we have that  $\dim V(\l)=1$, then $\l(\h')=0$.
\end{lemma}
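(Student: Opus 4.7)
The plan is to reduce the lemma to showing that $h_\lambda\in\h$ (the element dual to $\lambda$ under the invariant form) actually lies in the supercenter $\mathfrak{z}(\g)$. Once this is established, invariance of the form gives $\mathfrak{z}(\g)\perp[\g,\g]$, whence $\lambda(h') = (h_\lambda,h') = 0$ for every $h'\in\h'\subseteq[\g,\g]$.

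The case $\lambda=0$ is trivial, so assume $\lambda\neq 0$. Because $V(\lambda)$ is $1$-dimensional, $\g_0$ acts on it by a character that vanishes on $[\g_0,\g_0]$; hence $\lambda$ vanishes on $\h\cap[\g_0,\g_0]$, and $h_\lambda\in\mathfrak{z}(\g_0)\subseteq\h$, so $[h_\lambda,\g_0]=0$. It remains to check $[h_\lambda,\g_1]=0$. Pick $v\in V(\lambda)$; the bracket $[v,v]$ has $\h$-weight $2\lambda$, which is purely central and nonzero in $\h^*$, whereas every weight of the adjoint action of $\h$ on $\g_0$ is either zero or a root of $\g_0$ (the latter vanishing on $\mathfrak{z}(\g_0)$). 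So $2\lambda$ is not a weight of $\g_0$ and $[v,v]=0$. By the $\g_0$-self-duality of $\g_1$, choose $w$ in the $1$-dimensional summand $V(-\lambda)$ with $(v,w)=1$; a short invariance computation gives $[v,w]=h_\lambda$.

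Now take any $\h$-weight vector $u\in\g_1$ of weight $\eta$. Applying the super-Jacobi identity to $[u,[v,w]]$ (all three elements odd) yields
\begin{equation*}
[u,h_\lambda] \;=\; [[u,v],w] \;-\; [v,[u,w]].
\end{equation*}
The brackets $[u,v]$ and $[u,w]$ have $\h$-weights $\eta+\lambda$ and $\eta-\lambda$; they cannot both be nonzero, since that would force both of these weights to lie in the weight set of $\g_0$ and hence to vanish on $\mathfrak{z}(\g_0)$, giving $\lambda|_{\mathfrak{z}(\g_0)}=0$ and so $\lambda=0$. A case analysis on $\eta$ then makes the surviving term vanish: if $\eta\neq\pm\lambda$, the nonzero cross-bracket lies in a root space of $[\g_0,\g_0]$, which acts trivially on the $1$-dimensional $V(\pm\lambda)\ni v,w$; if $\eta=\pm\lambda$, form-invariance identifies the cross-bracket in $\h$ with a scalar multiple of $h_\lambda$, and the remaining expression equals $\pm(\lambda,\lambda)$ times a basis vector, with $(\lambda,\lambda)=0$ obtained by specializing $u=v$ in the same identity (which gives $2[v,h_\lambda]=0$). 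In every case $[u,h_\lambda]=0$, so $h_\lambda\in\mathfrak{z}(\g)$ and the lemma follows.

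I expect the main technical hurdle to be keeping weights and signs straight across the super-Jacobi case analysis; the key structural ingredient throughout is the orthogonal decomposition $\h=\mathfrak{z}(\g_0)\oplus\h^{ss}$ together with the fact that roots of the reductive $\g_0$ vanish on $\mathfrak{z}(\g_0)$, which is what makes the observations ``$2\lambda$ is not a weight'' and ``both cross-brackets cannot be nonzero'' go through.
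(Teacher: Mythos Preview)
Your proof is correct and at its core uses the same Jacobi-identity computation as the paper: bracket a weight vector $u\in\g_1$ against the pair $v\in V(\lambda)$, $w\in V(-\lambda)$, and exploit that $[\g_0,\g_0]$ kills the one-dimensional modules $V(\pm\lambda)$ together with $\|\lambda\|^2=0$. The paper phrases this as ``$\lambda(h_\mu)=0$ for every $\g_1$-weight $\mu$'' while you phrase it dually as ``$h_\lambda$ centralizes $\g_1$'', but since $\lambda(h_\mu)=(\lambda,\mu)=\mu(h_\lambda)$ these are the same statement.

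Two small remarks. First, your observation that $[u,v]$ and $[u,w]$ cannot both be nonzero is correct but not actually needed in the generic case $\eta\neq\pm\lambda$: there both cross-brackets land in root spaces of $[\g_0,\g_0]$ and hence act trivially on $v$ and $w$ regardless, so both terms on the right vanish independently. Second, your framing yields the pleasant byproduct $h_\lambda\in\mathfrak{z}(\g)$, which the paper does not state explicitly (though it is equivalent to the lemma once one unwinds $\h'=\h\cap[\g,\g]$).
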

\begin{proof} Let $h\in\h'$. If $h\in [\g_0,\g_0]$ the claim is obvious; if $h\in [\g_1,\g_1]$ then we may assume that $h=[x_\mu,x_{-\mu}]=h_\mu$, $\mu$ being a 
$\h$-weight of $\g_1$. Assume first $\mu\pm\l\ne0$.  Then, for $v_{\lambda}\in V(\lambda)$, we have 
\begin{equation}\label{1dim}0=[v_{-\l},[v_\l,x_\mu]]=\mu(h_\l)x_\mu-[v_\l,[v_{-\l},x_\mu]]=\mu(h_\l)x_\mu
\end{equation}
so that $\mu(h_\l)=0$ or $\l(h_\mu)=0$. 
It remains to deal with the case $\mu=\pm\l$. We have
$$[[v_\l,v_{-\l}],v_{\pm \l}]= \pm ||\l||^2 v_{\pm\l}.$$
This finishes the proof, since $||\l||^2=0$. Indeed, $[v_\l,v_\l]$ is a weight vector of weight $2\l$. This  implies either $\l=0$, and we are done,  or $[v_\l,v_\l]=0$. In the latter case
$||\l||^2 v_\l=[[v_\l,v_{-\l}],v_\l]=0$ by the Jacobi identity.
\end{proof}
In turn, by Lemma \ref{unid},
\begin{align*}
\g^{(2)}&:=[\g^{(1)},\g^{(1)}]\\&=\h' + [\g_0,\g_0]\oplus(\sum_{\l\in\h^*,\,\dim V(\l)>1}V(\l)).
\end{align*}
Finally define
$$\underline\g=\g^{(2)}/ (\mathfrak z(\g) \cap \g^{(2)}).$$
\begin{lemma}\label{lf}
\
\begin{enumerate} 
\item The radical of the restriction of the invariant form to $\g^{(2)}$ equals $\mathfrak z(\g) \cap \g^{(2)}$.
\item $\underline \g$ is an orthogonal direct  sum of quadratic simple Lie superalgebras.
\end{enumerate}
\end{lemma}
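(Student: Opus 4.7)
The plan is to prove (1) first, then derive (2) from a structure theorem applied to $\underline\g$.

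For the easy inclusion $\mathfrak{z}(\g)\cap\g^{(2)}\subseteq R$, where $R$ denotes the radical of $(\cdot,\cdot)|_{\g^{(2)}}$, I would use invariance together with $\g^{(2)}\subseteq[\g,\g]$: for central $z$ and $y=\sum_i[a_i,b_i]\in\g^{(2)}$, one has $(z,y)=\sum_i\pm([z,a_i],b_i)=0$.

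For the reverse inclusion $R\subseteq\mathfrak{z}(\g)$, I would set $J=(\g^{(2)})^\perp\subseteq\g$, which is an ideal of $\g$ because $\g^{(2)}$ is. The standard identity $[I,I^\perp]=0$ valid for an ideal $I$ of a quadratic Lie superalgebra (a quick consequence of invariance applied to $([a,b],c)=\pm(a,[b,c])$ for $a\in I^\perp$, $b\in I$, $c\in\g$, combined with non-degeneracy on $\g$) yields $[\g^{(2)},J]=0$. Since $R=\g^{(2)}\cap J$, this forces $[R,\g^{(2)}]=[R,J]=0$, so $R$ is an abelian ideal of $\g$ centralizing $\g^{(2)}+J$. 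To upgrade this to $R\subseteq\mathfrak{z}(\g)$, I would decompose $R$ along $\h$-weights as $R=R_0\oplus\bigoplus_{\alpha\neq 0}R_\alpha$ (valid since $R$ is $\h$-stable): for each nonzero $\alpha$ with $\alpha|_{\h'}\neq 0$, the space $(\g^{(2)})_{-\alpha}$ exhausts $\g_{-\alpha}$, since every 1-dimensional $V(\lambda)$ discarded in passing from $\g$ to $\g^{(2)}$ has weight vanishing on $\h'$ by Lemma \ref{unid}; the non-degenerate pairing $\g_\alpha\times\g_{-\alpha}$ then forces $R_\alpha=0$. The remaining cases---$R_0\subseteq\h'$, and nonzero $\alpha$ with $\alpha|_{\h'}=0$---are squeezed from the explicit decomposition \eqref{guno}, using that elements of $\h'$ on which all roots of $[\g_0,\g_0]$ and all nontrivial weights of big $V(\lambda)$'s vanish must already be central in $\g$.

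For (2), once (1) is established the form descends to a non-degenerate invariant supersymmetric form on $\underline\g=\g^{(2)}/R$, and $\underline\g$ is perfect with trivial center. I would then apply a Medina--Revoy type argument: for any proper ideal $I\subset\underline\g$, $I^\perp$ is also an ideal, the intersection $I\cap I^\perp$ is an isotropic ideal which by (1) applied to $\underline\g$ must vanish, so $\underline\g=I\oplus I^\perp$ orthogonally; iterating on minimal ideals exhibits $\underline\g$ as an orthogonal direct sum of simple quadratic Lie superalgebras.

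The main obstacle is the reverse inclusion in (1), and specifically handling elements of $R$ whose $\h$-weight restricts trivially to $\h'$. Here the basic type hypothesis, channeled through \eqref{guno} and Lemma \ref{unid}, prevents the pathological configurations that would otherwise allow non-central elements in $R$, and is therefore indispensable.
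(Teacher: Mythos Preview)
Your approach to (1) via the identity $[I,I^\perp]=0$ for ideals of a quadratic Lie superalgebra is correct and arguably cleaner than the paper's iterated-invariance argument. One streamlining: once you have $[R,\g^{(2)}]=0$ you get in particular $[R,[\g_0,\g_0]]=0$, so $R$ lies in the trivial $[\g_0,\g_0]$-isotypic component of $\g^{(2)}$, which by the explicit description of $\g^{(2)}$ is exactly $\h'$. This makes your weight-by-weight case split (nonzero $\alpha$ with $\alpha|_{\h'}\neq 0$ versus $\alpha|_{\h'}=0$) unnecessary. From $R\subseteq\h'$ and Lemma~\ref{unid} one then gets $[R,M_{triv}]=0$, hence $R\subseteq\mathfrak z(\g)$.

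Your argument for (2), however, has a genuine gap. The phrase ``by (1) applied to $\underline\g$'' does not show that isotropic ideals of $\underline\g$ vanish. Part (1) identifies the \emph{radical} of a restricted form with a central piece; applied to $\underline\g$ (which is perfect, so $\underline\g^{(2)}=\underline\g$) it only reconfirms that the form on $\underline\g$ is non-degenerate. An isotropic ideal $I$ satisfies $(I,I)=0$, not $(I,\underline\g)=0$, so it need not lie in the radical. Your identity $[I,I^\perp]=0$ only shows that $I$ centralizes $I^\perp$, and since $I\subseteq I^\perp$ one has $I+I^\perp=I^\perp\subsetneq\underline\g$, so this alone does not force $I\subseteq\mathfrak z(\underline\g)$. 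Perfect centerless quadratic Lie (super)algebras with nonzero isotropic ideals do exist (e.g.\ $T^*$-type extensions); what excludes them here is precisely the basic-type structure, and that has to be used. The paper does this by a direct argument inside $\underline\g$: if $\i_1\neq 0$ one finds some $\pi(V(\lambda))\subset\i$ and shows $\pi(h_\lambda)\in\i$ pairs nontrivially with another element of $\i$, contradicting isotropy; hence $\i=\i_0\subseteq\mathfrak z(\underline\g_0)$, and then $[\i,\underline\g_1]=0$ together with $\mathfrak z(\g^{(2)})=\mathfrak z(\g)\cap\g^{(2)}$ forces $\i=0$. You need to supply an argument of this kind.
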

\begin{proof} It suffices to show that if $x\in \g^{(2)}$ belongs to the radical of the (restricted) form, then it belongs to the center of $\g$.
We know that $(x,[y,z])=0$ for all $y,z\in \g^{(1)}$; invariance of the form implies that  $[x,y]$ belongs to the radical of the form restricted to $\g^{(1)}$. This in turn 
means that $([x,y],[w,t])=0$ for all $w,t\in \g$. Therefore, $0=([x,y],[w,t])=([[x,y],w],t)\,\forall\,t\in \g$. Since the form on $\g$ is nondegenerate, we have that 
$[x,y]\in\mathfrak z(\g)$ for any $y\in \g^{(1)}$. If $x\in\g_1$ and $y\in \g^{(1)}_0$, then  $[x,y]\in \mathfrak z(\g)\cap(\sum_{\l\in\h^*,\,\l_{|\h'}\ne 0}V(\l))=\{0\}$. This imples that $x$ commutes with $\g^{(1)}_0$. Since $x\in \sum_{\l\in\h^*,\,\l_{|\h'}\ne 0}V(\l)$, we have $x=0$. If $x\in\g_0$, then, if $y\in \sum_{\l\in\h^*,\,\l_{|\h'}\ne 0}V(\l)$, we have $[x,y]\in \mathfrak z(\g)\cap(\sum_{\l\in\h^*,\,\l_{|\h'}\ne 0}V(\l))=\{0\}$. If $y\in \g^{(2)}_0$, then $[x,y]\in[\g_0,\g_0]\cap\mathfrak z(\g)=\{0\}$. So $x\in\mathfrak z(\g^{(2)})$. This implies that $x\in\h'$, so it commutes also with $M_{triv}$ and $\h$, hence $x\in\mathfrak z(\g)$, as required.\par
To prove the second statement, it suffices to show that there does not exist an isotropic ideal in $\underline \g$. Indeed, if this is the case and $\i$ is a minimal ideal, then by minimality 
either $\i\subseteq \i^\perp$ or $\i\cap\i^\perp=\{0\}$. Since we have excluded the former case, we have $\underline \g=\i\oplus \i^\perp$ with  $\i$ a  simple Lie superalgebra endowed with a non degenerate form and we can conclude by induction. \par
Suppose that $\i$ is an isotropic ideal. If $x\in \g^{(2)}$, we let $\pi(x)$ be its image in  $\underline\g$. If $\i_1\ne\{0\}$, we have that there is $\pi({V(\l)})\subset \i_1$. We can choose an highest weight vector $v_{\l}$  in $V(\l)$ and a vector $v_{-\l}\in\g_1$ of weight $-\l$ such that $(v_\l,v_{-\l})=1$. Then $\pi( h_\l)=[\pi( v_\l),\pi( v_{-\l})]\in \i$. Note that $h_\l\not\in \mathfrak z(\g)$. In fact, since $\dim V(\l)>1$, $\l_{|\h\cap [\g_0,\g_0]}\ne0$. On the other hand, if $[h_\l,\g^{(2)})]\ne 0$, then there is $0\ne v_\mu\in( \g^{(2)})_\mu$ such that $[h_\l,v_\mu]=\l(h_\mu)v_\mu\ne0$. In particular $\pi( v_{\mu})\in \i$. Choose $v_{-\mu}\in (\g^{(2)})_{-\mu}$ such $(v_\mu,v_{-\mu})\ne 0$. Then $\pi( v_{-\mu})=-\frac{1}{\l(h_\mu)}[\pi( h_\l), \pi( v_{-\mu})]\in\i$. But then $\i$ is not isotropic. It follows that $h_\l\in\mathfrak z(\g^{(2)})=\mathfrak z(\g)\cap \g^{(2)}$, which is absurd. 
Then $\i=\i_0$, hence $\i\subset \mathfrak z(\underline \g_0)$. Since $[\i,\underline \g_1]=0$, we have that $\pi^{-1}(\i)\subset \mathfrak z(\g^{(2)})$. Since $\mathfrak z(\g^{(2)})=\mathfrak z(\g)\cap\g^{(2)}$, we have $\i=\{0\}$.
\end{proof}
\vskip10pt
At this point we have the following decomposition:
$$\g=(\h+[\g_0,\g_0])\,\oplus\sum_{\l\in\h^*,\,\dim V(\l)>1}V(\l)\oplus M_{triv}
. $$
Let, as in the proof of Lemma \ref{lf}, $\pi:\g^{(2)}\to \underline \g$ be the projection. Since $\mathfrak z(\g)\cap[\g_0,\g_0]=0$, we see that $[\g_0,\g_0]=\pi([\g_0,\g_0])$, hence we can see the set of positive roots for $\g_0$ as a set of positive roots for $\underline\g_0$. By Lemma \ref{lf}, we have  
\begin{equation}\label{decogbar}
\underline \g=\bigoplus\limits_{i=1}^k \underline \g(i),
\end{equation} with $\underline \g(i)$ simple ideals. It is clear that $\g$ acts on $\underline \g$ and that the projection $\pi$ intertwines the action of $\g_0$ on $\g^{(2)}_1$ with that on $\underline\g_1$. Since $[\g_0,\g_0]=\pi([\g_0,\g_0])$, we see that $\underline\g(i)_1$ is a $[\g_0,\g_0]$-module. Since the decomposition \eqref{decogbar} is orthogonal, we see that the $[\g_0,\g_0]$-modules $\underline\g(i)_1$ are inequivalent. It follows that  that $\mathfrak z(\g_0)$ stabilizes $\underline\g(i)_1$, thus $\underline\g(i)_1$ is a $\g_0$-module. 

 We now discuss the $\g_0$-module structure of $\underline\g(i)_1$.
 By the classification of simple Lie superalgebras, either $\underline \g(i)_1= V(i)$ with   $V(i)$ self dual irreducible $\underline \g_0$-module or there is a polarization (with respect to $(\cdot,\cdot)$)
$\underline \g(i)_1= V\oplus V^*$ with $V$ an irreducible $\underline \g_0$-module. 
 In the first case $\underline\g(i)_1$ is an irreducible $\g_0$-module. In the second case, since the action of $\h$ is semisimple, $\underline\g(i)$ decomposes as $V_1(i)\oplus V_2(i)$, with $V_j(i)$ ($j=1,2$) irreducible $\g_0$-modules. If $V_1(i)$ is not self dual, then the decomposition $\underline\g(i)=V_1(i)\oplus V_2(i)$ is a polarization. If $V_1(i)=V_1(i)^*$ and $V_2(i)=V_2(i)^*$ then  the center of $\g_0$ acts trivially on $\underline\g(i)_1$, thus $V$ and $V^*$ are actually $\g_0$-modules. We can therefore choose $V_1(i)=V$ and $V_2(i)=V^*$.

The simple ideals $\underline \g(i)$ are basic classical Lie superalgebras. By the classification of such algebras (see \cite{Kacsuper}) there is a contragredient  Lie superalgebra $\tilde \g(i)$ such that $\underline \g(i)=[\tilde \g(i),\tilde \g(i)]/\mathfrak z([\tilde \g(i),\tilde \g(i)])$. 
 Choose  Chevalley generators $\{\tilde e_j,\tilde f_j\}_{j\in J(i)}$ for $\tilde \g(i)$. Let $\underline e_j, \underline f_j$ be their image in $\underline \g(i)$. 
 
 We claim that $\underline e_j, \underline f_j$ are $\h$-weight vectors.
The vectors  $\underline e_j, \underline f_j$ are root vectors for $\underline \g(i)$.  If  the roots of $\underline e_j, \underline f_j$  have multiplicity one, then, $\underline e_j, \underline f_j$ must be $\h$-stable, hence they are $\h$-weight vectors. If there are roots of higher multiplicity then $\underline \g(i)$ is of type $A(1,1)$. Let $\ov d$ be the derivation on $\underline \g(i)$ defined by setting $\ov d(\underline \g_0)=0$, $\ov d(v)=v$ for $v\in V_1(i)$, and $\ov d(v)=-v$ for $v\in V_2(i)$. Then it is not hard to check that $\tilde\g(i)=\C d\oplus \C c\oplus \underline \g(i)$ with bracket defined as in Exercise 2.10 of \cite{Kac}. If the roots of $\underline e_j,\underline f_j$ have multiplicity two, then $\tilde e_j,\tilde f_j$ are odd root vectors of $\tilde \g(i)$. In particular $\tilde e_j$ is in $V_1(i)$  and $\tilde f_j$ is in $V_2(i)$. This implies that $\underline e_j$ is in $V_1(i)$ and $\underline f_j$ is in $V_2(i)$. Since $\mathfrak z(\g_0)$ acts as multiple of the identity on $V_1(i)$ and $V_2(i)$, we see that $\underline e_j,\underline f_j$ are $\h$-weight vectors also in this case.

 Since $\pi$ restricted to $[\g_0,\g_0]+\g^{(2)}_1$ is an isomorphism, we can define $e_j,f_j$ to be the unique elements of $[\g_0,\g_0]+\g^{(2)}_1$ such that $\pi(e_j)=\underline e_j$ and $\pi(f_j)=\underline f_j$.
 Since $\underline e_j,\underline f_j$ are $\h$-weight vectors, we have that $e_j,f_j$ are root vectors for $\g$.

 Set $J=\cup_i J(i)$. We can always assume that
the positive root vectors of $\underline\g_0$ are in the algebra spanned by $\{\underline e_j\mid j\in J\}$.

 Let $\a_j\in\h^*$ be the weight of $e_j$.  We note that the weight of $f_j$ is $-\a_j$ for any $j\in J$. One way to check this is the following: if $j\in J(i)$, there is an invariant form $<\cdot,\cdot>$ on $\tilde \g(i)$ such that $<\tilde e_j,\tilde f_j>\ne 0$. Since $\underline \g(i)$ is simple, the form $(\cdot,\cdot)$ is a (nonzero) multiple of the form induced by $<\cdot,\cdot>$. In particular $(\underline e_j,\underline f_j)\ne 0$. Since $(e_j,f_j)=(\underline e_j,\underline f_j)$, we see that the root of $f_j$ is $-\a_j$.

 Subdivide   $\Lambda$ in \eqref{isotyp} as $\Lambda=\{0\}\cup\Lambda^+\cup\Lambda^-$ with $\L^+\cap\L^-=\emptyset$ and $\Lambda^-= -\Lambda^+$ (which is possible since  the form $(\cdot,\cdot)$ is nondegenerate on $M'_{triv}$). 
Choose a basis  $\{e^\l_i\mid i=1,\ldots, \dim M(\l)\}$ in $M(\l)$ for $\l\in \L^+$ and let $\{f^\l_i\}\subset M(-\l)$ be the dual basis.  Also $(\cdot,\cdot)_{|M(0)\times M(0)}$ is nondegenerate, hence we can find a polarization $M(0)=M^+\oplus M^-$. Let $\{e_i^0\}$ and $\{f_i^0\}$ be a basis of $M^+$ and its dual basis in $M^-$, respectively. 

We now check that relations
\begin{equation*} \label{r1}
[e_i,f_j]=\d_{ij}h_i,\ i,j\in J\quad
[e_i,f^\l_j]=[e^\l_i,f_j]=0,\ j\in J\quad
[e^\l_i,f^\mu_j]=\d_{\l,\mu}\d_{i,j}h^\l_i,
\end{equation*}
 hold for $\{e_j,f_j\}_{j\in J}\cup \{e^\l_i,f^\l_i\}_{\l\in\L^+\cup\{0\}}$.  Assume now $i\ne j$, $i,j\in J$. Then, since $[\underline e_{i},\underline f_{j}]=0$,  $[e_i,f_j]\in \mathfrak z(\g)\cap\g^{(2)}\subset \h'$.  This implies that $\a_i=\a_j$ so  $[e_i,f_j]\in\C h_{\a_i}$. If $e_i$ is even then $\a_i$ is a root of $\g_0$ so $\pi(h_{\a_i})\ne 0$, hence $[e_i,f_j]=0$. If $e_i$ is odd and $\pi(h_{\a_i})=0$, since $[\underline e_i,\underline f_i]=(\underline e_i,\underline f_i)\pi(h_{\a_i})=0$, we have that $[\underline e_i,\underline f_j]=0$ for any $j\in J$. In particular, $\underline e_i$ is a lowest weight vector for $\underline \g_0$. On the other hand, since $h_{\a_i}\in\mathfrak z(\g)$, we have in particular that $\a(h_{\a_i})=0$ for any root of $\g_0$. This implies that $\C\underline e_i$ is stable under the adjoint action of $\underline \g_0$. This is absurd since $\underline \g_1$ does not have one-dimensional  $\underline \g_0$-submodules.
 
   If $\l,\mu\in \L^+\cup\{0\}$, then $[e_i^\l,f_j^\mu]$ is in the center of $\g_0$, hence $[e^\l_i,f^\mu_j]=\d_{i,j}\d_{\l,\mu}h_\l$. Moreover it is obvious that $[e^\l_h,f_j]=[e_j,f^\l_h]=0$ if $e_j,f_j$ are even. It remains to check that  $[e^\l_h,f_{j}]=[e_{j},f^\l_h]=0$ when $e_j,f_j$ are odd.

This follows from the more general 
\begin{lemma}\label{Mcommg2}
$$
[M_{triv},\g^{(2)}_1]=0.
$$
\end{lemma}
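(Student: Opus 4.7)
The plan is to show that $[m,v]$, which lies in $\g_0$, is orthogonal to the whole of $\g_0$ under the invariant form, and then conclude by nondegeneracy. For $m\in M_{triv}$, $v\in\g^{(2)}_1$, and $y\in\g_0$, invariance of the form gives
$$([m,v],y)=(m,[v,y]).$$
Since $\g^{(2)}_1=\sum_{\dim V(\l)>1}V(\l)$ is a sum of $\g_0$-submodules of $\g_1$, it is $\g_0$-stable, so $[v,y]\in\g^{(2)}_1$. Thus the lemma reduces to the orthogonality statement $(M_{triv},\g^{(2)}_1)=0$.

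To establish this orthogonality I would use the $[\g_0,\g_0]$-invariance of the form. By definition $[\g_0,\g_0]$ annihilates $M_{triv}$, while each summand $V(\l)$ of $\g^{(2)}_1$ is a nontrivial irreducible $[\g_0,\g_0]$-module: the centre $\mathfrak z(\g_0)$ acts by scalars on the $\g_0$-irreducible $V(\l)$, so $\g_0$- and $[\g_0,\g_0]$-irreducibility coincide, and $\dim V(\l)>1$ excludes the trivial module. For $u\in M_{triv}$, $v\in V(\l)$, $x\in[\g_0,\g_0]$, invariance yields
$$(u,[x,v])=-([x,u],v)=0,$$
so $(u,[\g_0,\g_0]\cdot V(\l))=0$. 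Nontriviality and irreducibility of $V(\l)$ force $[\g_0,\g_0]\cdot V(\l)=V(\l)$, hence $(u,V(\l))=0$; summing over the summands gives the claim.

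Combining the two observations, $([m,v],y)=0$ for every $y\in\g_0$. Since $[m,v]\in[\g_1,\g_1]\subset\g_0$ and the form restricts nondegenerately to $\g_0$ (because $\g_0\perp\g_1$ and the form on $\g$ is nondegenerate), the conclusion $[m,v]=0$ follows. The only real step is the orthogonality $M_{triv}\perp\g^{(2)}_1$, which I expect to be routine: it comes down directly to the contrasting $[\g_0,\g_0]$-module structure of the two subspaces, and requires none of the finer structural information developed earlier (the projection $\pi$, the decomposition \eqref{decogbar}, or the Chevalley generators $e_j,f_j$).
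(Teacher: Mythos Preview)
Your proof is correct and follows essentially the same strategy as the paper: reduce via invariance and nondegeneracy of the form on $\g_0$ to the orthogonality $(M_{triv},\g^{(2)}_1)=0$, then deduce this from the contrasting $\g_0$-module structures. The only cosmetic difference is that the paper argues the orthogonality by observing that the one-dimensional $\g_0$-module $\C x$ and the irreducible $V(\mu)^*$ (of dimension $>1$) are inequivalent, whereas you pass through $[\g_0,\g_0]$-irreducibility and nontriviality of $V(\l)$; both routes amount to the same Schur-type observation.
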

\begin{proof}Choose $x\in M(\l)$ and $y\in V(\mu)$ with $\dim V(\mu)>1$. It is enough to show that
$([x,y],z)=0$ for any $z\in \g_0$. Observe that, since $\C x$ and $V(\mu)^*$ are inequivalent as $\g_0$-modules, we have that $(x,V(\mu))=0$. Since $([x,y],z)=(x,[y,z])$ and $[y,z]$ is in $V(\mu)$, we have the claim.
\end{proof}

\vskip 10 pt
The outcome of the above construction is that we have a triangular decomposition
\begin{equation}\label{dd}
\g= \n+ \h+ \n_-,
\end{equation}
where $\n$ (resp. $\n_-$) is the algebra generated by  $\{e_j,\mid j\in J\}\cup\{e^\l_i\mid \l\in\L^+\cup\{0\}\}$ (resp. 
$\{f_j,\mid j\in J\}\cup\{f^\l_i\mid \l\in\L^+\cup\{0\}\}$).
By Lemma \ref{Mcommg2}, we see that $[e^\l_h, e_{j}]=0$. It follows that 
$$
\n=\n_{triv}\oplus \mathfrak e,
$$
where $\n_{triv}$ is the algebra generated by  $\{e^\l_i\mid \l\in\L^+\cup\{0\}\}$ and $\mathfrak e$ is the algebra generated by $\{e_j,\mid j\in J\}$. Notice that 
$$\n_{triv}=M^+\oplus \sum_{\l\in\L^+}M(\l).
$$
This follows from the fact that the right hand side is an abelian subalgebra. Since $\mathfrak e\subset \g^{(2)}$, we have the orthogonal decomposition
\begin{equation}\label{decompn0}
\n=M^+\oplus (\sum_{\l\in\L^+}M(\l))\oplus (\n\cap \g^{(2)}). 
\end{equation}

 Choose any maximal isotropic subspace $\h^+$ in $\h$. The previous constructions imply the following fact.

\begin{lemma} \label{isotropic}$\h^++ \n$ is a maximal isotropic subspace in $\g$. \end{lemma}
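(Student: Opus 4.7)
The plan is to verify isotropy and then maximality, both reducing to bookkeeping on the pieces of \eqref{decompn0} and the triangular decomposition \eqref{dd}.

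For isotropy, three vanishings need checking. First, $(\h^+,\h^+)=0$ is the defining property of $\h^+$. Second, $(\h^+,\n)=0$: the subspace $\h^+\subset\g_0$ is super-orthogonal to the odd summand $\n_{triv}\subset\g_1$, and orthogonal to $\mathfrak e\subset\g^{(2)}$ since $\mathfrak e$ is a sum of $\h$-root spaces with nonzero roots. Third, $(\n,\n)=0$ is checked piece by piece: $M^+$ is isotropic by the chosen polarization of $M(0)$; $(M^+,M(\la))=0$ and $(M(\la),M(\mu))=0$ for $\la,\mu\in\L^+$ by weight considerations, using $(-\L^+)\cap\L^+=\emptyset$; $\mathfrak e$ is isotropic because two positive $\h$-roots never sum to zero; and finally $(\n_{triv},\mathfrak e)=0$.

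The one genuinely nontrivial ingredient, and the step I would flag as the main obstacle, is $(\n_{triv},\mathfrak e)=0$. The part involving $\mathfrak e\cap\g_0$ is zero by supersymmetry ($\n_{triv}\subset\g_1$), and the part involving $\mathfrak e\cap\g_1$ reduces to $(M_{triv},\g^{(2)}_1)=0$. This last holds by the inequivalent-isotypic argument already exploited in Lemma \ref{Mcommg2}: as $[\g_0,\g_0]$-modules, $M_{triv}$ is trivial while every $V(\la)$ appearing in $\g^{(2)}_1$ satisfies $\dim V(\la)>1$ and so lies in a nontrivial isotypic component, and $[\g_0,\g_0]$-invariance of the form forces the pairing to vanish.

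For maximality I would argue by contradiction. Assume $x\in\g$ satisfies $(x,\h^++\n)=0$ and $(x,x)=0$, and write $x=h+u+v$ using \eqref{dd} with $h\in\h$, $u\in\n$, $v\in\n_-$. Testing against $\n$ and invoking $(\h,\n)=(\n,\n)=0$ isolates $(v,\n)=0$; since the pairing $\n\times\n_-$ is nondegenerate, being block-diagonal with respect to $M^\pm$, $M(\pm\la)$, and $\mathfrak e\times\mathfrak f$ with the last block nondegenerate because $(e_j,f_j)\neq 0$, this gives $v=0$. Testing against $\h^+$ then gives $(h,\h^+)=0$, while $(x,x)=(h,h)$ (the cross term $(h,u)$ and the self-term $(u,u)$ both vanish by the isotropy checks already carried out), yielding $(h,h)=0$. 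Since the form is nondegenerate on $\h$ and $\h^+$ is maximal isotropic there, these two conditions force $h\in\h^+$, and hence $x\in\h^++\n$, completing the argument.
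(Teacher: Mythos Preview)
Your isotropy and maximality outlines are largely sound, and the treatment of $(\n_{triv},\mathfrak e)=0$ via the isotypic argument, as well as the explicit maximality proof via $x=h+u+v$, are both fine (the latter is in fact more detailed than the paper's two-line reduction to ``$\n$ is maximal isotropic in $\n+\n_-$ and $\h\perp(\n+\n_-)$'').

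There is, however, a genuine gap at the step ``$\mathfrak e$ is isotropic because two positive $\h$-roots never sum to zero.'' At this point in the paper no notion of positive root for $\g$ has been established; positivity has been fixed only for the roots of $\g_0$. The odd generators $e_j$ are constructed by passing to $\underline\g=\bigoplus_i\underline\g(i)$ and lifting Chevalley generators of the contragredient covers $\tilde\g(i)$; it is only in $\tilde\g(i)$ that one knows the weights of $\tilde\n(i)$ form a positive system in which $\alpha$ and $-\alpha$ cannot both occur. There is no a~priori reason why the $\h$-weights of $\mathfrak e$ (non-negative integer combinations of the $\alpha_j$) should inherit this property: the $\alpha_j$ are $\h$-weights, not $\tilde\h(i)$-weights, root multiplicities in $\underline\g(i)$ can exceed~$1$ (type $A(1,1)$), and nothing you have said rules out two odd generators having opposite $\h$-weights or a relation $\sum c_j\alpha_j=0$ with $c_j\ge0$. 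The paper handles exactly this point by observing that $(x,y)=(\pi(x),\pi(y))$ for $x,y\in\g^{(2)}$ (since the radical of the restricted form is $\ker\pi$ by Lemma~\ref{lf}), reducing to orthogonality of the $\underline\g(i)$ and then to isotropy of $p(\tilde\n(i))$ via the contragredient structure of $\tilde\g(i)$. Your one-line weight claim skips this reduction, and I do not see how to justify it without essentially reproducing the paper's argument.
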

\begin{proof}
We first prove that $\n$ is isotropic. By \eqref{decompn0}, it is enough to check that $M^+\oplus(\sum_{\l\in\L^+}M(\l))$ and $(\n\cap \g^{(2)})$ are isotropic. By construction  $M^+$ is isotropic. Moreover, if $\l\ne -\mu$, then $M(\l)^*$ and $M(\mu)$ are inequivalent, thus  $(M(\l),M(\mu))=0$. This implies that $M(\l)$ is isotropic if $\l\ne0$ and $(M(\l), M(\mu))=0$ if $\l\ne\mu$, $\l,\mu\in\L^+\cup\{0\}$.

 If $x,y\in\n\cap\g^{(2)}$ and $\pi(x)\in\underline \g(i)$, $\pi(y)\in\underline \g(j)$ with $i\ne j$, then $(x,y)=(\pi(x),\pi(y))$ $=0$. If $i=j$, let  $p:[\tilde \g(i),\tilde \g(i)]\to \underline \g(i)$ be the projection.  Let $\tilde \n(i)$ be the algebra spanned by the $\{\tilde e_j\}_{j\in J(i)}$. Then $\pi(x),\pi(y)\in p(\tilde \n(i))$.  Recall that the weights of $\tilde \n(i)$ are a set of positive roots for $\tilde \g(i)$, and, since $\tilde \g(i)$ is contragredient $\a$ and $-\a$ cannot be both positive roots for $\tilde \g(i)$. This implies that $\tilde \n(i)$ is an isotropic subspace of $\tilde \g(i)$ for any invariant form of $\tilde \g(i)$. Since $\underline \g(i)$ is simple, $(\cdot,\cdot)$ is induced by an invariant form on $\tilde \g(i)$ so $p(\tilde n(i))$ is isotropic.
 
  Clearly, $(\h,\n)=(\h,\n\cap\g_0)=0$, since $\n\cap\g_0$ is the nilradical of a Borel subalgebra. Note that $(\n,\g)=(\n,\n_-)$ so $\n$ and $\n_-$ are non degenerately paired. Thus $\n$ is a maximal isotropic subspace of $\n+\n_-$. Since $\h$ and $\n+\n_-$ are orthogonal, the result follows.
\end{proof}
\begin{prop}\label{triangular}
\begin{equation}\label{triang}
\g= \n\oplus \h\oplus \n_-.
\end{equation}
\end{prop}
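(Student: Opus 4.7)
The plan is to verify separately that the sum $\n+\h+\n_-$ is direct and that it spans all of $\g$; both pieces rest on the structural work already carried out, together with the isotropy and pairing statements of Lemma \ref{isotropic}.

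For directness, the key inputs are (i) $\n$ is isotropic by Lemma \ref{isotropic}, and by a symmetric argument so is $\n_-$; (ii) the restriction of $(\cdot,\cdot)$ to $\n\times\n_-$ is non-degenerate, as observed inside the proof of Lemma \ref{isotropic}; (iii) $\h\perp\n$ and $\h\perp\n_-$ -- the odd parts of $\n,\n_-$ lie in $\g_1$, hence are orthogonal to $\h\subset\g_0$ by parity, while the even parts sit in the positive/negative root spaces of $\g_0$ for $\h$ and so are orthogonal to $\h$ by invariance of the form. From these, $\n\cap\n_-=0$: any $x$ in the intersection is orthogonal to all of $\n_-$ by isotropy of $\n_-$, and since $x\in\n$ the non-degenerate pairing forces $x=0$. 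Next, if $h+n_++n_-=0$ with $h\in\h$ and $n_\pm\in\n_\pm$, pairing this identity with an arbitrary $n'_-\in\n_-$ kills the $h$ and $n_-$ terms and yields $(n_+,n'_-)=0$ for every $n'_-$, whence $n_+=0$; symmetrically $n_-=0$, and therefore $h=0$.

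For spanning I would start from the decomposition
\begin{equation*}
\g=(\h+[\g_0,\g_0])\oplus\sum_{\l\in\h^*,\,\dim V(\l)>1}V(\l)\oplus M_{triv}
\end{equation*}
recorded just before Lemma \ref{Mcommg2}. The first summand is $\h\oplus\n_0^+\oplus\n_0^-$ via the standard triangular decomposition of the reductive $\g_0$, and by our choice of positive roots $\n_0^+\subset\mathfrak{e}$, $\n_0^-\subset\mathfrak{f}$. The summand $M_{triv}$ splits as $M^+\oplus M^-\oplus\sum_{\l\in\L^+}M(\l)\oplus\sum_{\l\in\L^+}M(-\l)$, whose positive halves are contained in $\n_{triv}\subset\n$ and whose negative halves lie in its analogue inside $\n_-$. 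For the middle summand, each simple ideal $\underline\g(i)$ of $\underline\g$ is basic classical and thus admits a triangular decomposition whose positive (resp.\ negative) nilpotent part is generated by $\{\underline e_j\}_{j\in J(i)}$ (resp.\ $\{\underline f_j\}_{j\in J(i)}$); lifting via $\pi$, the positive/negative $\h$-weight pieces of the $V(\l)$'s lie in $\mathfrak{e}/\mathfrak{f}$, while any Cartan contribution is absorbed by $\ker\pi=\mathfrak{z}(\g)\cap\g^{(2)}\subset\h$.

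The main obstacle I foresee is the last step: showing that the preimages under $\pi$ of positive/negative root vectors of each $\underline\g(i)$ really lift into $\mathfrak{e}/\mathfrak{f}$ without stray corrections outside $\h$. This is precisely where the earlier care in choosing the lifts $e_j,f_j$ as $\h$-weight vectors -- and in particular the $A(1,1)$ argument involving the derivation $\ov d$ -- pays off: once that is in hand the argument runs uniformly across all simple ideals, and combined with the isotropy/non-degeneracy analysis above it yields the triangular decomposition \eqref{triang}.
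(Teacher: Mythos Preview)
Your proposal is correct and follows the same route as the paper. The paper is considerably terser: spanning is simply quoted from \eqref{dd}, which was already recorded as ``the outcome of the above construction'', so your entire spanning discussion (and the obstacle you worry about) is a re-derivation of material the paper treats as already in hand; for directness the paper only writes the one line ``if $x\in\n\cap\n_-$ then $x$ would be in the radical of the form'', leaving the remaining pieces (that $\h\perp(\n+\n_-)$ and that the pairing $\n\times\n_-$ is non-degenerate, both from the proof of Lemma~\ref{isotropic}) implicit, whereas you spell them out via the pairing argument $h+n_++n_-=0\Rightarrow n_+=n_-=h=0$.
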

\begin{proof} Having \eqref{dd} at hand, it remains to prove that the sum is direct. This follows  from Lemma \ref{isotropic}:
indeed, if $x\in \n\cap\n_-$, then $x$ would be in the radical of the form.\end{proof}



\section{The super affine vertex algebra}

Set $\bar\g=P\g$, where $P$ is the parity reversing functor.   In the following, we refer the reader to \cite{KacD} for basic definitions and notation regarding Lie conformal and vertex algebras. In particular, for the reader's convenience we recall Wick's formula, which will be used several times in the following. Let $V$ be a vertex algebra, then 
\begin{equation}\label{Wick}
[a_\l:bc:]=\, :[a_\l b]c:+p(a,b):b[a_\l 
c]:+\int_0^\l[[a_\l b]_\mu c]d\mu.
\end{equation}

Consider the
conformal algebra
$R=(\C[T]\otimes\g)\oplus(\C[T]\otimes\bar \g)\oplus\C K\oplus\C \bar K$  with 
$\lambda$-products
\begin{align}\label{prod}
&[a_\lambda b]=[a,b]+\lambda (a,b)K,\\
&[a_\lambda \bar b]=\overline{[a,b]},\quad
[\bar a_\lambda b]=p(b)\overline{[a,b]},\label{seconda}\\\label{terza}
&[\bar a_\lambda \bar b]=(b,a)\bar K,
\end{align}
$K,\,\bar K$ being even central 
elements.
 Let $V(R)$ be the corresponding universal vertex algebra, and denote by $V^{k,1}(\g)$ its quotient by the ideal generated by $K-k|0\rangle$ and $\overline K - |0\rangle$. The vertex algebra $V^{k,1}(\g)$ is called  the super affine vertex algebra of level $k$. The relations are the same used in \cite{KMP}æ for even variables. We remark that the order of $a,b$ in the r.h.s. of  \eqref{terza} is relevant.
 
Recall that one defines the  current Lie  
conformal superalgebra $Cur(\g)$ as 
$$ Cur(\g)=(\C[T]\otimes \g)+\C\mathcal K
$$ with $T(\mathcal K)=0$ and the $\l$-bracket defined for $a,b\in1\otimes 
\g$  by
$$ [a_\l b]=[a,b]+\l(a,b)\mathcal K,\quad [a_\l \mathcal K]=[\mathcal K_\l \mathcal K]=0.
$$ Let $V(\g)$ be its universal enveloping vertex algebra. The quotient $V^k(\g)$ of $V(\g)$ by the ideal generated by $\mathcal K-k|0\rangle$ is called the level $k$ affine vertex algebra.

If $A$ is a superspace equipped with a skewsupersymmetric bilinear form $<\cdot,\cdot>$ one also has the Clifford Lie conformal superalgebra $$C(A)=(\C[T]\otimes A)+\C \ov{\mathcal K}$$
 with $T( \ov{\mathcal K})=0$ and the $\l$-bracket defined for $a,b\in1\otimes A$  by
$$ [a_\l  b]=<a,b>\ov{\mathcal K},\quad [ a_\l  \ov{\mathcal K}]=[ \ov{\mathcal K}_\l \ov{\mathcal K}]=0.
$$

Let $V$ be the universal enveloping vertex algebra of $C(A)$. 
The quotient  of $V$ by the ideal generated by $ \ov{\mathcal K}-|0\rangle$ is denoted by $F(A)$. Applying this construction to $\ov \g$ with the form $<\cdot,\cdot>$ defined by $<a,b>=(b,a)$ one obtains the vertex algebra  $F(\ov \g)$.
\par
We define the Casimir operator of $\g$ as $\Omega_\g=\sum_ix^ix_i$ if $\{x_i\}$ is  a basis 
of $\g$ and $\{x^i\}$  its dual basis w.r.t. $(\cdot,\cdot)$ (see \cite[pag. 85]{Kacsuper}). Since $\Omega_\g$  supercommutes with any element of  $U(\g)$, the generalized eigenspaces of its action on $\g$ are ideals in $\g$. Observe that $\Omega_\g$ is a symmetric operator: indeed
\begin{align*}
(\Omega_\g(a),b)&=\sum_i([x^i,[x_i,a]],b)=\sum_i-p(x^i,[x_i,a])([[x_i,a],x^i],b)\\&=\sum_ip(x^i)([a,x_i],[x^i,b])=\sum_ip(x^i)(a,[x_i,[x^i,b]])=(a,\Omega_\g(b)).
\end{align*}
Since $\Omega_\g$ is symmetric, the generalized eigenspaces provide an orthogonal decomposition of $\g$. Moreover, since $\s$ preserves the form, we have that $\s\circ \Omega_\g=\Omega_\g\circ \s$, hence $\s$ stabilizes the generalized eigenspaces. Since $\s$ is assumed to be indecomposable, it follows that that  
$\Omega_\g$ has a unique eigenvalue. Let $2g$ be such eigenvalue. If the form $(\cdot,\cdot)$ is normalized as in \cite[(1.3)]{KW}, then the number $g$ is called the dual Coxeter number of $\g$.
\begin{lemma} \label{central}If $\Omega_\g-2gI\ne 0$ then $g=0$. Moreover, in such a case, $\Omega_\g(\g)$ is a central ideal. 
\end{lemma}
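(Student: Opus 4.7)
\medskip
\noindent\textbf{Proof plan.} Write $A := \Omega_\g \in \operatorname{End}(\g)$ for the endomorphism induced on $\g$ by the adjoint action, and set $N := A - 2gI$; by hypothesis $N \ne 0$. I first record four properties of $N$, all inherited from those of $A$ established above: (i) $N$ is self-adjoint with respect to $(\cdot,\cdot)$; (ii) $N$ is a $\g$-module endomorphism, so that $N([x,y]) = [N(x),y] = [x,N(y)]$ for all $x,y \in \g$, because $\Omega_\g$ supercommutes with every element of $U(\g)$; (iii) $N$ is nilpotent, since $2g$ is the unique generalized eigenvalue of $A$; and (iv) $N$ commutes with $\sigma$. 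As consequences, $\operatorname{Ker}(N)$ and $\operatorname{Im}(N)$ are $\sigma$-stable ideals of $\g$ with $\operatorname{Im}(N) = \operatorname{Ker}(N)^\perp$.

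The key reduction is that both assertions of the lemma follow from the single inclusion $\operatorname{Im}(N) \subseteq \mathfrak{z}(\g)$. Indeed, any $z \in \mathfrak{z}(\g)$ is killed by $A$ (since $A(z) = \sum_i [x^i,[x_i,z]] = 0$), so if $\operatorname{Im}(N) \subseteq \mathfrak{z}(\g)$ then $A \circ N = 0$ on $\g$; expanding $A = 2gI + N$ this reads $N^2 = -2gN$, and an easy induction gives $N^{k+1} = (-2g)^k N$. Since $N$ is nilpotent but nonzero, picking $y$ with $N(y) \ne 0$ and $k$ large enough that $N^{k+1}(y) = 0$ forces $(-2g)^k N(y) = 0$, whence $g = 0$. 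The central-ideal assertion is itself this inclusion, which by invariance of the form is equivalent to $[\g,\g] \subseteq \operatorname{Ker}(N)$ (using $[\g,\g]^\perp = \mathfrak{z}(\g)$).

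To prove $N|_{[\g,\g]} = 0$ I would exploit the triangular decomposition of Proposition \ref{triangular}. Since $N$ commutes with the adjoint action of $\h$, it preserves each $\h$-weight space of $\g$, and nilpotency then makes it vanish on every one-dimensional weight space. This kills $N$ on those Chevalley generators $e_j, f_j$ ($j \in J$) whose weight spaces in $\g$ are one-dimensional, and hence on $h_j = [e_j, f_j]$ by (ii); an analogous argument handles the generators $e_i^\lambda, f_i^\lambda$ coming from the isotypic decomposition of $M_{triv}$ (where by construction one-dimensionality is available on one side of a nondegenerate pairing, and Lemma \ref{Mcommg2} controls mixed brackets). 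Applying (ii) iteratively then propagates the vanishing to arbitrary brackets, giving $N|_{[\g,\g]} = 0$.

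The principal obstacle is the case when some simple ideal $\underline{\g}(i)$ is of type $A(1,1)$: there certain root spaces have dimension two, and the nilpotency of $N$ no longer forces it to vanish on them. I would bypass this by invoking the derivation $\overline d$ recalled in Section 3, which separates $V_1(i)$ from $V_2(i)$ inside $\underline{\g}(i)_1$: together with the self-adjointness of $N$, this refines the weight decomposition finely enough that $N$ must again vanish on the generating root vectors, completing the argument.
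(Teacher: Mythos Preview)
Your reduction---deducing $g=0$ from $N(\g)\subseteq\mathfrak z(\g)$ via $N^{k+1}=(-2g)^kN$---is elegant and reverses the paper's order of argument. The difficulty lies in your proof that $N|_{[\g,\g]}=0$ via one-dimensional weight spaces, specifically at the $A(1,1)$ obstacle. The derivation $\ov d$ you invoke is outer, so there is no a priori reason the $\g$-module endomorphism $N$ should commute with it or preserve the pieces $V_1(i)$, $V_2(i)$ it separates; indeed in precisely this case the paper notes that the center of $\g_0$ acts trivially on $\underline\g(i)_1$, so nothing in $\ad(\g_0)$ distinguishes $V_1(i)$ from $V_2(i)$, and self-adjointness of $N$ does not supply the missing constraint. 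Your $M_{triv}$ sketch is also too terse: the spaces $M(\lambda)$ may have dimension greater than one, so nilpotency on the weight space alone is insufficient (though the claim can be salvaged using that $[M(\lambda),M(-\lambda)]=\C h_\lambda$ is one-dimensional).

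The paper bypasses weight-space dimensions entirely. It first proves $g=0$ directly: if $N\ne 0$ then $\g\ne[\g,\g]$ (otherwise $\g$ is centerless and $(\cdot,\cdot)$-irreducible, hence simple, so $N(\g)$ would be a proper ideal and thus zero), and the resulting nontrivial center carries eigenvalue $0$. For centrality of $\Omega_\g(\g)$ it then descends $\Omega_\g$ to $\underline\g=\bigoplus_i\underline\g(i)$ and uses that $\Omega_\g(\underline\g(i))$ is a \emph{proper ideal} of the simple superalgebra $\underline\g(i)$, hence zero---an argument uniform in $i$ that needs no special handling of $A(1,1)$; a short direct computation using Lemma~\ref{Mcommg2} and $\Vert\lambda\Vert^2=0$ then disposes of $M_{triv}$.
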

\begin{proof}
Let $\g=\sum_{S=1}^k\g(S)$ be a  orthogonal decomposition in $(\cdot,\cdot)$-irreducible ideals. Clearly $\Omega_\g(\g(i))\subset \g(i)$, hence we can assume without loss of generality that $\g$ is $(\cdot,\cdot)$-irreducible. 

If $x$ is in the center of $\g$, then $x$ is orthogonal to $[\g,\g]$, so, if $\g=[\g,\g]$, then $\g$ must be centerless. In particular, in this case, $\g=\underline \g$ is a sum of simple ideals, but, being $(\cdot,\cdot)$-irreducible, it is simple. Since $\Omega_\g-2gI$ is nilpotent, we have that $(\Omega_\g-2gI)(\g)$ is a proper ideal of $\g$, hence $\Omega_\g=2gI$. 

Thus, if $\Omega_\g-2gI\ne 0$, we must have $\g\ne[\g,\g]$. 
Since $\g\ne[\g,\g]$, the form becomes degenerate when restricted to $[\g,\g]$ and its radical is contained in the center of $\g$. It follows that the center of $\g$ is nonzero. Clearly $\Omega_\g$ acts trivially on the center, hence $g=0$. 

Since $\g^{(2)}$ is an ideal of $\g$, clearly $\Omega_\g$ acts on it. Since $\Omega_\g(\mathfrak z(\g))=0$, this action descends to 
$\underline \g$. Recall that we have $\underline \g=\bigoplus\limits_{i=1}^k \underline \g(i)$, with $\underline \g(i)$ simple ideals. We already observed that these ideals are inequivalent as $\g_0$-modules, thus $\Omega_\g(\underline \g(i))\subset \underline \g(i)$. Since $\Omega_\g(\underline \g(i))$ is a proper ideal, we see that $\Omega_\g(\underline\g)=0$. Thus $\Omega_\g(\g^{(2)})\subset \mathfrak z(\g)\cap\g^{(2)}$. We now check that $\Omega_\g(M_{triv})=0$. Let $x\in M(\l)$. If $x_i\in \g^{(2)}_1$, by Lemma \ref{Mcommg2}, $[x_i,x]=0$. If  $x_i\in M(\mu)$ with $\mu\ne -\l$, then 
$[x_i,x]=0$. If $x_i\in M(-\l)$, then $[x^i,[x_i,x]]=(x_i,x)[x^i,h_\l]=(x_i,x)\Vert \l\Vert^2x_i=0$. It follows that $\Omega_\g(x)=\Omega_{\g_0}(x)=\Vert \l\Vert^2x=0$. The final outcome is that $\Omega_\g(\g_1)\subset \mathfrak z(\g)\cap\g^{(2)}\subset \h'$. Thus, since $\Omega_\g$ preserves parity, 
\begin{equation}\label{omegag1}
\Omega_\g(\g_1)=\{0\}.
\end{equation} It follows that $\Omega_\g(\g_0)=\Omega_\g(\g)$ is an ideal of $\g$ contained in $\g_0$. Since $\Omega_\g$ is nilpotent, $\Omega_\g(\g)$ is a nilpotent ideal, hence it intersects trivially $[\g_0,\g_0]$. It follows that  $[\Omega_\g(\g),\g_0]=0$. Since $ \Omega_\g(\g)$ is an ideal contained in $\g_0$, $[\Omega_\g(\g),\g_1]\subset\g_1\cap\g_0=\{0\}$ as well. The result follows.
\end{proof}
\begin{rem}\label{simple} Note that we have proved the following fact: if $\g$ is centerless and $(\cdot,\cdot)$-irreducible then it is simple (cf. \cite[Theorem 2.1]{Be}).
\end{rem}

Set $C_\g=\Omega_\g-2gI_\g$.
\begin{prop}\label{isomorfi} Assume $k+g\ne0$. Let   $\{x_i\}$ be a basis 
of $\g$ and let $\{x^i\}$ be its dual basis w.r.t. $(\cdot,\cdot)$.  For $x\in\g$ set  
\begin{equation}\label{xtilde}\widetilde x= x-
\frac{1}{2}\sum\limits_i :\overline{[x,x_i]}\overline x^i:+\frac{1}{4(k+g)}C_\g(x).\end{equation} 
The map $x\mapsto \widetilde  x$, $\ov
y\mapsto \ov y$,  induces an isomorphism of vertex 
algebras 
$V^k(\g)\otimes F(\overline \g)\cong V^{k+g,1}(\g)$.
\end{prop}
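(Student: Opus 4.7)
My plan is to construct $\phi: V^k(\g)\otimes F(\bar\g)\to V^{k+g,1}(\g)$ on generators by $x\mapsto\tilde x$, $\bar y\mapsto\bar y$, invoke the universal property of the source, and then bootstrap the homomorphism to an isomorphism by a PBW-filtration argument. For $\phi$ to be well-defined as a vertex algebra homomorphism, it suffices to verify the three $\lambda$-bracket identities
\[
[\bar x_\lambda\bar y]=(y,x)|0\rangle,\qquad [\tilde x_\lambda\bar y]=0,\qquad [\tilde x_\lambda\tilde y]=\widetilde{[x,y]}+\lambda k(x,y)|0\rangle,
\]
the first being immediate from \eqref{terza}. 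The second expresses commutation of the two tensor factors; the third says that the hatted currents form a copy of $V^k(\g)$ at renormalized level $k$.

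I would first dispose of $[\tilde x_\lambda\bar y]=0$. The linear piece gives $[x_\lambda\bar y]=\overline{[x,y]}$ by \eqref{seconda}. Applying Wick's formula \eqref{Wick} to $[:\overline{[x,x_i]}\bar x^i:_\lambda \bar y]$, the integral part vanishes since $[\bar a_\lambda\bar b]$ is $\lambda$-independent, and the two surviving summands collapse, via the dual-basis identity and super-skew-symmetry of the form on $\bar\g$, to $2\overline{[x,y]}$; this cancels the linear contribution after the $-\tfrac12$ prefactor. The Casimir correction $\tfrac{1}{4(k+g)}C_\g(x)$ contributes a term of the form $\overline{[C_\g(x),y]}$, which vanishes because $C_\g(\g)$ is central by Lemma \ref{central}.

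The main task is computing $[\tilde x_\lambda\tilde y]$. Expansion splits this into four kinds of contributions: (i) the linear bracket $[x_\lambda y]=[x,y]+\lambda(k+g)(x,y)|0\rangle$; (ii) two cross brackets between $x$ (resp.~$y$) and the normal-ordered correction of the other argument; (iii) the bracket between the two normal-ordered corrections; (iv) brackets involving $C_\g(x),C_\g(y)$. A Wick expansion of (ii) contributes pieces of the shape $:\overline{[[x,y],x_i]}\bar x^i:$ together with scalar residues of the form $\lambda\sum_i p(x_i)(x^i,[x,[y,x_i]])|0\rangle$ which, by invariance of the form, are evaluated through $\sum_i p(x_i)[x^i,[x_i,z]]=\Omega_\g(z)=2gz+C_\g(z)$. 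Term (iii) requires a second application of Wick and produces further $:\overline{[[x,y],x_i]}\bar x^i:$ pieces that combine with those from (ii) into exactly $-\tfrac12\sum_i :\overline{[[x,y],x_i]}\bar x^i:$, i.e.\ the quadratic part of $\widetilde{[x,y]}$, together with scalar contributions that, pooled with those of (ii), reorganize the $(k+g)$-factor from (i) into $k$ and leave over precisely a term proportional to $C_\g([x,y])$. That residue is absorbed by the $\tfrac{1}{4(k+g)}$-correction summand of $\widetilde{[x,y]}$ (well defined since $k+g\ne 0$); the brackets in (iv) are harmless by centrality of $C_\g(\g)$.

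To complete the argument I would use the Li filtration: $\tilde x\equiv x$ and $\bar y\equiv\bar y$ modulo strictly higher filtered degree, so the induced map on the associated graded is the identity on the symmetric-exterior algebra of $(\g\oplus\bar\g)[T^{-1}]T^{-1}$, and $\phi$ is bijective by PBW for vertex algebras. The principal technical obstacle lies in the third step: the methodical bookkeeping of super-signs $p(x_i,\cdot)$ through the nested Wick expansions, and the verification that every scalar residue combines into exactly the level shift $k+g\mapsto k$ and the Casimir correction inside $\widetilde{[x,y]}$, with Lemma \ref{central} invoked precisely to control the terms that distinguish the super case from the Lie-algebraic Kac--Todorov setting.
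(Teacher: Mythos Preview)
Your overall strategy---verify the $\lambda$-bracket relations for $\tilde x$ and $\bar y$ and then invoke a PBW/filtration argument---is sound and parallels the paper's. But your handling of the $C_\g$ terms contains a genuine error. You assert that the scalar residue left after combining (i)--(iii) is ``a term proportional to $C_\g([x,y])$,'' to be absorbed by the Casimir correction in $\widetilde{[x,y]}$, while the brackets in (iv) are ``harmless by centrality.'' In fact $C_\g([x,y])=0$ for all $x,y$ (since $(C_\g([x,y]),z)=([x,y],C_\g(z))=(x,[y,C_\g(z)])=0$ by Lemma~\ref{central}), so that correction summand in $\widetilde{[x,y]}$ vanishes and absorbs nothing. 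The actual leftover from (i)+(ii)+(iii), working in $V^{k+g,1}(\g)$, is $-\tfrac12\lambda(x,C_\g(y))$, and it is cancelled precisely by (iv): although $[C_\g(x),y]=0$ in $\g$, the central-extension term survives, giving
\[
\alpha[C_\g(x)_\lambda y]+\alpha[x_\lambda C_\g(y)]=2\alpha\lambda(k+g)(x,C_\g(y))=\tfrac12\lambda(x,C_\g(y)).
\]
This cancellation is the entire reason for the coefficient $\tfrac{1}{4(k+g)}$ in \eqref{xtilde}; dismissing (iv) as harmless misses the point of the construction and, whenever $C_\g\ne 0$ (e.g.\ $gl(m,n)$), would leave an uncancelled term.

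As a secondary remark, the paper organizes the main computation more efficiently than your direct nine-term expansion. It first establishes $[\bar a_\lambda\tilde b]=0$ (equivalent to your $[\tilde x_\lambda\bar y]=0$ by sesquilinearity), observes that this forces the entire normal-ordered part of $\tilde a$ to have vanishing $\lambda$-bracket with $\tilde b$, and thereby reduces $[\tilde a_\lambda\tilde b]$ to $[a_\lambda\tilde b]+\alpha[C_\g(a)_\lambda\tilde b]$, each of which is a single Wick expansion. This bypasses your step (iii) entirely.
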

\begin{proof}Set $\a=\frac{1}{4(k+g)}$.
Fix $a,b\in\g$.
Since, by Lemma \ref{central}, $C_\g(b)$ is central, we get, from Wick formula \eqref{Wick},
that
\begin{align*} 
&[a_\l\widetilde b]=
[a_\l  
b]-\tfrac{1}{2}\sum_i[a_\l:\overline{[b,x_i]}\overline x^i:]+\a\l(a,C_\g(b))K\\
&=[a_\l b]-
\tfrac{1}{2}\sum_i(:[a_\l\overline{[b,x_i]}]\overline  
x^i:+p(a,\ov{[b,x_i]}):\overline{[b,x_i]}[a_\l\overline x^i]:)\\&-\frac{1}{2}\sum_i\int_0^\l[[a_\l 
\overline{[b,x_i]}]_\mu \overline x^i]d \mu+\a\l(a,C_\g(b))K\\
&=[a_\l b]-
\tfrac{1}{2}\sum_i(:\overline{[a,[b,x_i]]}\overline 
x^i:+p(a,\ov{[b,x_i]}):\overline{[b,x_i]}\,\overline{[a,x^i]}:)\\
&-\frac{1}{2}\sum_i\l(x^i,[a,[b,x_i]])\ov K+\a\l(a,C_\g(b))K.
\end{align*}
Using the invariance of the form and Jacobi identity, we have

\begin{align}\notag
[a_\l\widetilde b]&=[a_\l 
b]-
\tfrac{1}{2}\sum_i(:\overline{[a,[b,x_i]]}\overline  
x^i:-p(a,b):\overline{[b,[a,x_i]]}\overline x^i:)
\\\notag&-\frac{1}{2}\sum_i\l(x^i,[a,[b,x_i]])\ov K+\a\l(a,C_\g(b))K
\\\notag&=[a_\l  
b]-\tfrac{1}{2}\sum_i:\overline{[[a,b],x_i]}\overline x^i:
\\\notag&-\frac{1}{2}\sum_i\l(a,[x^i,[x_i,b]])\ov K+\a\l(a,C_\g(b))K
\\\label{pp}&=[a_\l  
b]-\tfrac{1}{2}\sum_i:\overline{[[a,b],x_i]}\overline x^i:
-\frac{1}{2}\l(a,\Omega_\g(b))\ov K+\a\l(a,C_\g(b))K\end{align} 

Next we prove that 
\begin{equation}\label{tre} [\overline a_\l 
\widetilde b]=0.
\end{equation} By Lemma \ref{central} and \eqref{Wick}, 
\begin{align*}[\overline a_\l \widetilde b]&=[\overline 
a_\l b]-
\tfrac{1}{2}\sum_i[\overline a_\l:\overline{[b,x_i]}\overline  
x^i:]\\&=p(b)\overline{[a,b]}-\tfrac{1}{2}\sum_i(
:[\overline a_\l 
\overline{[b,x_i]}]\overline x^i:+p(\ov a,\ov{[b,x_i]}):\overline{[b,x_i]}[\overline 
a_\l \overline x^i]:)\\
&=p(b)\overline{[a,b]}-\tfrac{1}{2}\sum_i(([b,x_i],a)\overline 
x^i+p(\ov a,\ov{[b,x_i]})(x^i,a)\overline{[b,x_i]})\\&=p(b)\overline{[a,b]}-
\tfrac{1}{2}(p(b)\overline{[a,b]}+p(b)\overline{[a,b]})=0.\end{align*} 
We now compute $[\widetilde a_\l \widetilde b]$. Using \eqref{tre}, we find $[\widetilde a_\l\widetilde  
b]=[a_\l\widetilde b]+\a[C_\g(a)_\l\widetilde b]$, hence, by \eqref{pp}
\begin{align*}\label{atbt}[\widetilde a_\l \widetilde b]&=[a_\l  
b]-\tfrac{1}{2}\sum_i:\overline{[[a,b],x_i]}\overline x^i:
-\tfrac{1}{2}\l(a,\Omega_\g(b))\ov K+\a\l(a,C_\g(b))K+\\
&+\a[C_\g(a)_\l\widetilde b]\\
&=[a_\l  
b]-\tfrac{1}{2}\sum_i:\overline{[[a,b],x_i]}\overline x^i:
-\tfrac{1}{2}\l(a,\Omega_\g(b))\ov K+\a\l(a,C_\g(b))K+\\
&+\a([C_\g(a)_\l  
b]-\tfrac{1}{2}\sum_i:\overline{[[C_\g(a),b],x_i]}\overline x^i:)\\
&
-\a(\frac{1}{2}\l(C_\g(a),\Omega_\g(b))\ov K+\a\l(C_\g(a),C_\g(b))K).
\end{align*}

By Lemma \eqref{central}, $[C_\g(a),b]=0$. Since $\Omega_\g(b)\in [\g,\g]$ and $C_\g(a)$ is central, we have $(C_\g(a),\Omega_\g(b))=0$.

 The term $(C_\g(a),C_\g(b))$ is zero as well: if $g\ne0$, then, by Lemma \ref{central}, $C_\g(b)=0$, and, if $g=0$, as above, $(C_\g(a),C_\g(b))=(C_\g(a),\Omega_\g(b))=0$. Thus, we can write
\begin{align*}
[\widetilde a_\l \widetilde b]&=[a_\l  
b]-\tfrac{1}{2}\sum_i:\overline{[[a,b],x_i]}\overline x^i:
-\tfrac{1}{2}\l(a,\Omega_\g(b))\ov K+\a\l(a,C_\g(b))K+\\
&+\a([C_\g(a),b]+\l(C_\g(a),b)K)\\
&=[a_\l  
b]-\tfrac{1}{2}\sum_i:\overline{[[a,b],x_i]}\overline x^i:
-\tfrac{1}{2}\l(a,\Omega_\g(b))\ov K+\a\l(a,C_\g(b))K+\\
&+\l(C_\g(a),b)K).
\end{align*}
In the last equality we used the fact that $C_\g(a)$ is central.

Since  $\Omega_\g$ (hence $C_\g$) is symmetric, we have
$$
[\widetilde a_\l \widetilde b]=[a_\l  
b]-\tfrac{1}{2}\sum_i:\overline{[[a,b],x_i]}\overline x^i:
-\tfrac{1}{2}\l(a,\Omega_\g(b))\ov K+2\a\l(a,C_\g(b))K.
$$
Note that $C_\g([a,b])=0$. In fact, for any $z\in\g$, 
$$
 (C_\g([a,b]),z)= ([a,b],C_\g(z))= (a,[b,C_\g(z)])=0.
$$
It follows that $[a_\l  
b]-\tfrac{1}{2}\sum_i:\overline{[[a,b],x_i]}\overline x^i:=[a,  
b]+\l(a,b)K-\tfrac{1}{2}\sum_i:\overline{[[a,b],x_i]}\overline x^i:=\widetilde{[a,b]}+\l(a,b)K$. Hence
\begin{align*}
[\widetilde a_\l \widetilde b]&=\widetilde{[a,b]}+\l(a,b)K-\tfrac{1}{2}\l(a,\Omega_\g(b))\ov K+2\a\l(a,C_\g(b))K\\
&=\widetilde{[a,b]}+\l(a,b)K-\l g(a,b)\ov K-\tfrac{1}{2}\l(a,C_\g(b)\ov K\\&+2\a\l(a,C_\g(b)K).\end{align*} 
Thus, in $V^{k+g,1}(\g)$, we have
\begin{align*}
[\widetilde a_\l \widetilde b]&=\widetilde{[a,b]}+\l(a,b)(k+g)-\l g(a,b)-\tfrac{1}{2}\l(a,C_\g(b))|0\rangle\\&+2\a\l(a,C_\g(b))(k+g)|0\rangle
\end{align*}
so, recalling that $\a=\frac{1}{4(k+g)}$, we get
$$
[\widetilde a_\l \widetilde b]=\widetilde{[a,b]}+\l k(a,b).
$$
We can now finish the proof as in Proposition 2.1 of \cite{KMP}.
 \end{proof}
 
 \begin{rem}
 If $g\ne 0$, as in Proposition 2.1 of \cite{KMP}, the map $x\mapsto \widetilde x$, $\ov y\mapsto \ov y$, ${\mathcal K}\mapsto K-g\ov K$, $\ov{\mathcal K}\mapsto \ov K$ defines a homomorphism of Lie conformal algebras $Cur(\g)\otimes F(\ov\g)\to V(R)$. In particular, if $g\ne 0$,
 Propositon \ref{isomorfi} holds true for any $k\in\C$. 
  \end{rem}

Let $A$ be a
vector superspace with a 
non-degenerate bilinear
skewsupersymmetric form $(\cdot,\cdot)$ and $\s$ an elliptic operator preserving the parity and leaving the form invariant. If $r\in \R$ let $\bar r=r+\ganz\in\R/\ganz$. Let $A^{\bar r}$  be the $e^{
2 \pi i r}$ eigenspace of $A$. 
We set 
$L(\s,A)=\oplus_{\mu\in\half+\bar r}(t^\mu\otimes A^{\bar r})$ and define 
the bilinear form 
$<\cdot,\cdot>$ on $L(\s,A)$ by setting $<t^\mu\otimes 
a,t^\nu\otimes  b>=\d_{\mu+\nu,- 1}(a,b)$. \par
If $B$ is any superspace endowed with a non-degenerate bilinear
skewsupersymmetric form $<\cdot,\cdot>$, we denote by 
$\mathcal{W}(B)$ be the quotient of the tensor algebra of $B$ modulo the ideal generated by 
$$a\otimes b-p(a,b)b\otimes a-<a,b>,\quad a,b\in B.$$
We now apply this construction to $B=L(\s,A)$ to obtain $\mathcal W(L(\s,A))$.
We choose   a 
maximal  isotropic subspace
$L^+$ of $L(\s,A)$ as follows: 
 fix a maximal isotropic subspace $A^+$  of 
$A^{\bar 0}$, and let 
$$L^+= \bigoplus_{\mu>-\frac{1}{2}}(t^\mu\otimes A^{\bar\mu}
)\oplus(t^{-
\frac{1}{2}}\otimes A^+ ).$$ \par We obtain a $\mathcal W(L(\s,A))$-module 
$CW(A)=\mathcal{W}(L(\s,A))/\mathcal{W}(L(\s,A))L^+$ (here $CW$ stands for ``Clifford-Weil").  

Note that $-I_A$ induces an involutive automorphism of $C(A)$ that we denote by $\omega$. Set $\tau=\omega\circ\s$. Then we can define fields 
$$Y(a,z)=\sum_{n\in\half+\bar r} (t^n\otimes a)z^{-n-1},\quad a\in\ 
A^{\bar r},
$$ 
where we let 
$t^n\otimes a$ act on $CW(A)$ by left multiplication. Setting furthermore 
$Y(\ov{\mathcal K},z)=I_A$, we get  a $\tau$-twisted representation of $C(A)$  on $CW(A)$ (that descends to a representation of $F(A)$). 
\par
Take now $A=\g$, and let  $\s$ be an automorphism of $\g$ as in Section \ref{2}.
Let $\g^{\ov 0}= \n^0\oplus \h^0\oplus \n^0_-$ be the triangular decomposition provided by Proposition \ref{triangular} applied to $\g^{\ov 0}$.
Choosing an isotropic subspace $\h^+$ of $\h^0$ we can choose the maximal isotropic subspace of $\g^{\ov 0}$ provided by Lemma \ref{isotropic} and 
construct the corresponding Clifford-Weil module $CW(\ov \g)$, which  we regard as a  $\tau$-twisted representation of $F(\ov \g)$. 

In light of Proposition \ref{isomorfi}, given a $\s$-twisted representation  $M$ of $V^k(\g)$, we can form $\s\otimes \tau$-twisted representation $X(M)=M\otimes CW(\ov\g)$ of $V^{k+g,1}(\g)$.

In the above setting, we choose $M$ in  a particular class of representations arising from the theory of twisted affine algebras.  We recall briefly their construction and refer the reader to \cite{KMP} for more details.

Let
$L'(\g,\si)=\sum_{j\in\R}(t^j\otimes\g^{\ov j})\oplus \C K$. This is a Lie 
superalgebra  with bracket defined by
$$ [t^m\otimes  a,t^n\otimes b ]=t^{m+n}\otimes[a,b] + 
\d_{m,-n}m(a,b)K,\quad m,n\in\R,
$$ $K$ being a central element.\par

\par  Let  $(\h^0)'=\h^0+\C K$. If 
$\mu\in ((\h^0)')^*$, we set $\ov\mu=\mu_{|\h^0}$. Set $\mathfrak 
n'=\mathfrak  n^0+\sum_{j>0}t^j\otimes\g^{\ov
j}$. Fix $\L\in ((\h^0)')^*$. A 
$L'(\g,\si)$-module $M$ is called a highest weight module with 
highest weight 
$\L$ if there is a nonzero vector $v_\L\in M$ such that 
\begin{equation}\label{highest}\mathfrak n'(v_\L)=0,\,\ 
hv_\L=\L(h)v_\L \text{  for $h\in(\h^0)'$, }\
U(L'(\g,\si))v_\L=M.\end{equation}

Let 
$\D^{\ov j}$ be the set of $\h^0$-weights of $\g^{\ov j}$.
 If $\mu\in(\h^0)^*$ and $\mathfrak m$ is any $\h^0$-stable subspace of $\g$, then we let $\mathfrak m_\mu$ be the corresponding weight space.  Denote by  $\D^0$  the 
set of roots (i. e. the nonzero $\h^0$-weights)  of $\g^{\ov 0}$. Set $\D^0_+=\{\a\in\D^0\mid \n^0_\a\ne\{0\}\}$.

Since $\n^0$ and $\n^0_-$ are non degenerately paired, we have that $-\D^0_+=\{\a\in\D^0\mid (\n_-)_\a\ne\{0\}\}$. By the decomposition \eqref{triang} we have 
$\D^0=\D^0_+\cup -\D^0_+$. 

Set
\begin{equation}\label{rhos}
\rho^{\ov 0}=\half\sum_{\a\in \D^0_+}(\sdim\,\n^{0}_\a) \a,\quad\rho^{\ov j}=
\half\sum_{\a\in\D^{\ov j}}(\sdim\,\g^{\ov j}_\a)\a\quad \text{if $\ov 
j\ne\ov 0$},
\end{equation}
\begin{equation}\label{rhosigma}
\rho_\si=
\sum_{0\le j\le\half}(1-2 j)\rho^{\ov j}.
\end{equation}
Finally set
\begin{equation}\label{zgs}
z(\g,\s)=\half\sum_{0\le  j< 
1}\frac{j(1-j)}{2}\sdim\,\g^{\ov j}.
\end{equation}
 Here and in the following we denote by $\sdim{}V$ the superdimension $\dim V_0 - \dim V_1$ of a superspace $V=V_0\oplus V_1$.
 
If $X$ is a twisted representation of a vertex algebra $V$  (see \cite[\S\ 3]{KMP}) and $a\in V^{\ov j}$, we let 
$$Y^X(a,z)=\sum_{n\in \ov j}a^X_{(n)}z^{-n-1}
$$
be the corresponding field. As explained in \cite{KMP}, a highest weight module $M$ for $L'(\g,\s)$ of highest weight $\L$ becomes automatically a $\s$-twisted representation of 
$V^{k}(\g)$ where $k=\L(K)$.

Set 
\begin{align}&\label{sugawcur}L^\g=\half\sum_i:x^ix_i:\in
V^{k}(\g),\\\label{sugawferm}&L^{\ov\g}=\half\sum_i:T(\ov x_i)\ov x^i:\in
F(\ov\g).\end{align}
 We can now prove (cf. \cite[Lemma 3.2]{KMP}):

\begin{lemma}\label{xixi} If $M$ is a highest weight module for $L'(\g,\s)$  with highest weight $\L$ and $\L(K)=k$
then
\begin{align}\label{azioneasinistra}
&\sum_i:C_\g(x^i)x_i:_{(1)}^M(v_\L)=\ov\L(C_\g(h_{\ov\L}))v_\L\\
&(L^\g)^M_{(1)}(v_\L)=\half(\ov \L+2\rho_\s,\ov \L)v_\L+kz(\g,\s)v_\L.
\end{align} 
\end{lemma}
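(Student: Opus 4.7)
The plan is to expand $:ab:^M_{(1)}$ using the standard formula
\[
:ab:^M_{(1)} = \sum_{m\le -1} a^M_{(m)} b^M_{(-m)} + p(a,b)\sum_{m\ge 0} b^M_{(-m)} a^M_{(m)},
\]
evaluate on $v_\L$, and use the highest weight relations $\mathfrak n'(v_\L)=0$ and $hv_\L=\ov\L(h)v_\L$ for $h\in\h^0$. In both formulas the first sum vanishes on $v_\L$, since it places an annihilation mode $b^M_{(-m)}$ with $-m\ge 1$ immediately on $v_\L$. For the second sum I split the basis $\{x_i\}$ by $\s$-eigenspace and further split its $\g^{\ov 0}$-part via the triangular decomposition $\g^{\ov 0}=\n^0\oplus\h^0\oplus\n^0_-$ from Proposition \ref{triangular}.

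For the first identity, I would invoke Lemma \ref{central}: $C_\g(x^i)$ is central in $\g$, so its modes commute with all other modes up to a central $K$-contribution, and only $m=0$ can yield a non-zero contribution on $v_\L$. This forces $x^i,x_i\in\g^{\ov 0}$, and among those only the $\h^0$-part of the basis survives, because $(x^i)^M_{(0)}v_\L=0$ whenever $x^i\in\n^0$, and the $\n^0_-$-contribution is in turn killed once $x_i$ is moved past the central factor $C_\g(x^i)$. What remains is $\sum_i\ov\L(C_\g(x^i))\ov\L(x_i)v_\L$, summed over a basis of $\h^0$ with $\{x^i\}$ its dual; by symmetry of $C_\g$ and the identity $\sum_i\ov\L(x_i)x^i=h_{\ov\L}$ this collapses to $\ov\L(C_\g(h_{\ov\L}))v_\L$.

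For the second identity I would carry out the Sugawara-type computation piece by piece. The $\g^{\ov 0}$-part reproduces $\half(\ov\L,\ov\L+2\rho^{\ov 0})v_\L$ in the standard way, by commuting $\n^0$-modes past $\n^0_-$-modes using the $\lambda$-bracket \eqref{prod}, with $\rho^{\ov 0}$ as in \eqref{rhos} emerging from the superdimensions of the root spaces $\n^0_\a$. For each $\ov j\neq\ov 0$ the contribution comes from finitely many reorderings of $(x^i)^M_{(m)}(x_i)^M_{(-m)}$ with $m\in j+\ganz$ close to $0$; their linear-in-$\ov\L$ pieces sum over $\ov j$ to the correction $(\rho_\s-\rho^{\ov 0},\ov\L)v_\L$ dictated by \eqref{rhosigma}, while the scalar pieces proportional to $k$ collect into $kz(\g,\s)v_\L$ via \eqref{zgs}.

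The main obstacle is the bookkeeping of the twisted zero-point energies: for each $\ov j\in(0,1)$ one must match the combinatorial factor arising from commuting finitely many twisted modes in the correct order with the precise coefficient $\half j(1-j)\sdim\g^{\ov j}$ in \eqref{zgs}, while keeping track of the Koszul signs because $\g$ is a Lie superalgebra (so superdimensions, not ordinary dimensions, appear). Once this is handled, the two constants $\rho_\s$ and $z(\g,\s)$ assemble exactly as claimed, and the argument closes as in \cite[Lemma 3.2]{KMP}.
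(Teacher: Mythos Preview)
Your overall strategy coincides with the paper's: expand the twisted mode $:A(x^i)x_i:^M_{(1)}$, use the highest weight conditions, and split the basis along $\s$-eigenspaces and the triangular decomposition of $\g^{\ov 0}$. The gap is in the very first formula. The expansion
\[
:ab:^M_{(1)}=\sum_{m\le -1}a^M_{(m)}b^M_{(-m)}+p(a,b)\sum_{m\ge 0}b^M_{(-m)}a^M_{(m)}
\]
is the \emph{untwisted} formula. In a $\s$-twisted representation the modes of $x^i\in\g^{-\ov s_i}$ are indexed by $-s_i+\ganz$, the split must be $n<-s_i$ versus $n\ge -s_i$, and, crucially, there is an extra correction term
\[
-\sum_{r\ge 0}\binom{-s_i}{r+1}\bigl(A(x^i)_{(r)}x_i\bigr)^M_{(-r)}
\]
(this is formula (3.4) of \cite{KMP}, which the paper uses explicitly). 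The $r=1$ piece of this correction is $-k\binom{-s_i}{2}(A(x^i),x_i)$, and after one commutation this combines to $-k\binom{s_i}{2}(A(x^i),x_i)$; for $A=Id$ the sum over $i$ is precisely $k\sum_{0<j<1}\tfrac{j(1-j)}{2}\sdim\g^{\ov j}=2k\,z(\g,\s)$. In other words, the ``twisted zero-point energies'' you flag as the main obstacle do not come from reordering finitely many modes of the double sum you wrote down --- they come from a term that is absent from that formula.

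For the first identity your heuristic is essentially right, but the same correction term has to be disposed of: one needs that $\sum_{i:s_i=j}(C_\g(x^i),x_i)$, the supertrace of $C_\g$ on $\g^{\ov j}$, vanishes because $C_\g$ is nilpotent (Lemma~\ref{central}). For the second identity you should also make explicit how the linear-in-$\ov\L$ piece for $\ov j\ne\ov 0$ arises: it is $\sum_{i:s_i>0}s_i[x^i,x_i]^M_{(0)}$, and the paper reduces this to $2\sum_{0<j<\tfrac{1}{2}}(1-2j)h_{\rho^{\ov j}}$ via the symmetry $\sum_{i:s_i=s}[x^i,x_i]=-\sum_{i:s_i=1-s}[x^i,x_i]$ together with $[x^i,x_i]=-p(x_i)h_{\a}$ for $x_i\in\g^{\ov s_i}_\a$. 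Once you replace your starting formula with the twisted one and keep these two bookkeeping points, your outline becomes exactly the paper's proof.
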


\begin{proof}  We can and do choose $\{x_i\}$ so that 
$x_i\in\g^{\ov s_i}$, for some $ \ov 
s_i\in\R/\ganz$. Let $A$ be any parity preserving operator on $\g$ which commutes with $\s$. In particular $A$ preserves $\g^{\ov j}$ for any $j\in\R$. By
(3.4) of \cite{KMP}, we have
\begin{align}\notag
\sum_i  :A(x^{i})x_{i}:^M_{(1)}&=\sum_i\left(\sum_{n<-s_i} 
A(x^{i})^M_{(n)} 
(x_{i})^M_{(-n)}+\sum_{n\geq -s_i}p(x_i) (x_{i})^M_{(-n)} 
A(x^{i})^M_{(n)}\right)\\&- 
\sum_{r\in\ganz_+}\binom{-s_i}{r+1}(A(x^{i})_{(r)}(x_{i}))^M_{(- 
r)}.\label{Sugawaraexpansion}
\end{align}
We choose $s_i\in [0,1)$, 
thus
\begin{align*}
&\sum_i :A(x^{i})x_{i}:^M_{(1)}(v_\L)=\\&\sum_i(p(x_i)(x_{i})^M_{( 
s_i)}A (x^{i})^M_{(-s_i)}+
s_i[A(x^{i}),x_{i}]^M_{(0)}-k\binom{-s_i}{2}(A(x^i),x_i))(v_\L),\end{align*}
which we can rewrite as 
\begin{align*}
&\sum_i(A (x^{i})^M_{(-s_i)}(x_{i})^M_{( 
s_i)}+
(s_i-1)[A(x^{i}),x_{i}]^M_{(0)}-k(\binom{-s_i}{2}-s_i)(A(x^i),x_i))(v_\L)=\\
&\sum_{i:s_i=0}A (x^{i})^M_{(0)}(x_{i})^M_{( 
0)}(v_\L)+\sum_i(
(s_i-1)[A(x^{i}),x_{i}]^M_{(0)}-k\binom{s_i}{2}(A(x^i),x_i))(v_\L).\end{align*}\par 

Assume now that $A=C_\g$. Then, since $C_\g(x^i)$ is central, $[C_\g(x^{i}),x_{i}]=0$. Note also that $\sum_{i:s_i=j}(C_\g(x^i),x_i)$ is the supertrace of $(C_\g)_{|\g^{\ov j}}$. Since $C_\g$ is nilpotent, we obtain that $\sum_{i:s_i=j}(C_\g(x^i),x_i)=0$. Thus
\begin{align*}
\sum_i :C_\g(x^{i})x_{i}:^M_{(1)}(v_\L)=\sum_{i:s_i=0}C_\g (x^{i})^M_{(0)}(x_{i})^M_{( 
0)}(v_\L).
\end{align*}

We choose the basis $\{x_i\}$ by choosing, for each $\a\in\D^0\cup\{0\}$, a basis $\{(x_\a)_i\}$ of $\g^{\ov 0}_\a$. Set $\{x_\a^i\}$ to be its dual basis in $(\g)_{-\a}$. If $x\in\g_\a$, then $0=C_\g([h,x_{\a}])=\a(h)C_\g(x_\a)$. If $\a\ne 0$ then this implies $C_\g(x_\a)=0$. If $\a=0$ and $(x_\a)_i\in\g_1$, then, by \eqref{omegag1}, we have that $C_\g((x_\a)_i)=0$ as well. This implies that, if $\{h_i\}$ is an orthonormal basis of $\h^0$,
\begin{align*}
\sum_i :C_\g(x^{i})x_{i}:^M_{(1)}(v_\L)&=\sum_{i}C_\g (h_{i})^M_{(0)}(h_{i})^M_{( 
0)}(v_\L)\\&=\sum_{i}\L(C_\g (h_{i}))\L(h_{i})v_\L=\ov\L(C_\g(h_{\ov \L}))v_\L.
\end{align*}

Let now $A=Id$. Clearly we can assume that the basis $\{(x_\a)_i\}$ of $\g^{\ov 0}_\a$ is the union of a basis of $\n_\a^0$ and a basis of $(\n^0_-)_\a$ if $\a\in\D^0_+$, while, if $\a=0$, we can choose the basis $\{(x_\a)_i\}$ to be the union of a basis of $\n_\a^0$, a basis of $(\n^0_-)_\a$ and an orthonormal basis $\{h_i\}$ of $\h^0$. We can therefore write
$$\sum\limits_{i: s_i=0}x^i_{(0)}(x_i)_{(0)}=2(h_{\rho_0})_{(0)}+\sum_i 
(h_i)_{(0)}^2+2\sum_{(x_\a)_i\in\n^0_\a}(x_{\a}^i)_{(0)}((x_\a)_i)_{(0)}.
$$ We  find that
\begin{align*}\sum_i 
:x^{i}x_{i}:^M_{(1)}(v_\L)&=(\ov\L+2\rho_0,\ov\L)v_\L+k(\sum_{0<j<1}
\frac{j(1-j)}{2}\sdim\,\g^{\ov j})v_\L\\ &+
\sum_{i:s_i>0}s_i[x^{i},x_{i}]^M_{(0)}(v_{\L}).\end{align*}\par In 
order to  evaluate
$\sum_{i:s_i>0}s_i[x^{i},x_{i}]^M_{(0)}(v_{\L})$, we observe  that
$$\sum_{i:s_i=s}[x^i,x_i]=\sum_{i:s_i=1-s}p(x_i)[x_i,x^i]=-\sum_{i:s_i=1-s}[x^i,x_i].$$ This relation 
is easily  derived by exchanging the roles
of $x_i$ and $x^i$.  
Hence\begin{align*}\sum_{i:s_i>0}s_i[x^{i},x_{i}]&=
\sum_{i: \frac{1}{2}>s_i>0}s_i[x^{i},x_{i}]+\sum_{i: 
1>s_i>\frac{1}{2}}s_i[x^{i},x_{i}]\\&=\sum_{i: 
\frac{1}{2}>s_i>0}s_i[x^{i},x_{i}]+\sum_{i: 0<s_i<\frac{1}{2}}(s_i-1)[x^{i},x_{i}]\\&=-
\sum_{i: \frac{1}{2}>s_i>0}(1- 
2s_i)[x^{i},x_{i}].\end{align*}\par We can choose
$x_i$ in 
$\g^{\ov s_i}_\alpha$ so  that $[x^i,x_i]=-p(x_i)h_{\alpha}$, hence 
\begin{equation}\label{sumsixixi}
\sum_{i}s_i[x^{i},x_{i}]=\sum_{i:0<s_i<\frac{1}{2}}2(1-
2s_i)h_{\rho^{\ov s_i}}, 
\end{equation}
hence
$$
\sum_{i}s_i[x^{i},x_{i}]^M_{(0)}(v_{\L})=\sum_{i:0<s_i<\frac{1}{2}}(1-
2s_i)(2\rho^{\ov s_i},\ov\L)v_\L. $$
This completes the proof of 
\eqref{azioneasinistra}.\end{proof}
\section{Dirac operators}\label{3}
As in the previous section $\{x_i\}$
is a homogeneous basis  of $\g$ and  $\{x^i\}$ is its dual basis. 
The  following element  of 
$V^{k,1}(\g)$:
\begin{equation}\label{G}G_\g=\sum_i:x^i\ov x_i:-\frac{1}{3}\sum_{i,j}:\ov{[x^i,x_j]}\ov x^j\ov x_i:
\end{equation} 
is called the affine Dirac operator. It has the following  properties:
\begin{align} &[a_\la G_\g]=\la k\overline a,\quad[\overline a_\la G_\g]=a\label{abarG}.
\end{align}
By the  sesquilinearity of the
$\l$-bracket, we also have
\begin{align} &[(G_\g)_\l a]=p(a)k(\la \overline a+T(\ov a)),\quad[(G_\g)_\l \ov a]=p(a)a\label{Gabar}.
\end{align}
If $\g$ is purely even, $G_\g$ is the Kac-Todorov-Dirac field considered in \cite{KMP} and in \cite{KacD} as an analogue of Kostant's cubic 
Dirac operator. It can be proved,  by using a suitable Zhu functor $\pi: V^{k,1}(\g)\to U(\g)\otimes \mathcal W(\ov \g)$, that $\pi(G_\g)$ is the Dirac operator considered 
by Huang and Pandzic in \cite{HP2}.

Write for shortness $G$ instead of $G_\g$. We 
want to
calculate $[G_\l G]$.    
We proceed in steps.     
Set 
\begin{equation}\label{teta}\theta(x)=\half \sum_i:\ov{[x,x_i]}\ov x^i:,\end{equation} and
note  that
\begin{equation}\label{diractilde} G=\sum_i:x^i\overline 
x_i:-
\tfrac{2}{3}\sum_{i}:\theta(x^i)\overline 
x_i:.
\end{equation}

We start by collecting some formulas.
\begin{lemma}
\begin{equation}
[a_\l\theta(b)]=\theta([a,b])+\frac{1}{2}\l(a,\Omega_\g(b)),\label{altheta1}
\end{equation}
\begin{equation}
[\ov a_\l\theta(b)]=p(b)\ov{[a,b]},\label{altheta2}
\end{equation}
\begin{equation}
[\theta(a)_\l b]=\theta([a,b])+\frac{1}{2}\l(a,\Omega_\g(b)),\label{altheta3}
\end{equation}
\begin{equation}
[\theta(a)_\l\ov b]=\ov{[a,b]}\label{altheta4},
\end{equation}
\begin{equation}
[\theta(a)_\l \theta(b))]=\theta([a,b])+\frac{1}{2}\l(a,\Omega_\g(b)),\label{altheta6}
\end{equation}
\begin{equation}
[\theta(a)_\l \sum_i:x^i\ov x_i:]=-\sum_i:(x^i-\theta(x^i))\ov{[x_i,a]}:+\frac{3}{2}\l\ov{\Omega_\g(a)},\label{altheta5}
\end{equation}
\begin{equation}
[\theta(a)_\l \sum_i:\theta(x^i)\ov x_i:]=\frac{3}{2}\l\ov{\Omega_\g(a)},\label{altheta7}
\end{equation}
\begin{equation}
\sum_i:x^i\theta(x_i):=\sum_i:\theta(x^i)x_i:,\label{altheta8}
\end{equation}
\begin{equation}
\sum_i:\theta(x^i)\theta(x_i):=0.\label{altheta9}
\end{equation}
\end{lemma}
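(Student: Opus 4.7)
The plan is to verify each of the nine identities by direct computation using Wick's formula (4.1) together with the defining $\l$-brackets (4.2)--(4.4) of the super affine vertex algebra, exploiting the invariance of $(\cdot,\cdot)$ and the definition $\Omega_\g(b)=\sum_i[x^i,[x_i,b]]$. For the single-$\theta$ brackets (4.12)--(4.15), I would expand $\theta(b)=\tfrac{1}{2}\sum_i :\ov{[b,x_i]}\,\ov{x^i}:$ and apply Wick. For (4.12), the two normal-ordered Wick terms take the shape
\[
\tfrac{1}{2}\sum_i\bigl(:\ov{[a,[b,x_i]]}\,\ov{x^i}: + p(a,\ov{[b,x_i]})\,:\ov{[b,x_i]}\,\ov{[a,x^i]}:\bigr);
\]
applying the Jacobi identity to rewrite $[a,[b,x_i]]=[[a,b],x_i]+p(a,b)[b,[a,x_i]]$ and using the $\g_0$-invariance of $\sum_i x^i\otimes x_i$ to shift the bracket from $x^i$ to $x_i$ in the second summand, everything collapses to $\theta([a,b])$. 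The integral term in Wick, since $[\ov u_\mu \ov v]=(v,u)\bar K$ is independent of $\mu$, contributes $\tfrac{1}{2}\l\sum_i (x^i,[a,[b,x_i]])\,\bar K$, which by invariance of the form equals $\tfrac{1}{2}\l(a,\Omega_\g(b))$. Formulas (4.13)--(4.15) are analogous; (4.13) is especially short because $[\ov a_\l \ov c]$ is a scalar, so no inner contractions survive. Formula (4.16) is then a Wick calculation in which one applies (4.14) twice and uses the identity $\sum_i:\ov{[u,x_i]}\,\ov{x^i}:=\sum_i:\ov{x^i}\,\ov{[x_i,u]}:$, a direct consequence of the invariance of $\sum_i x_i\otimes x^i$.

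For (4.17) and (4.18), I would apply Wick to the outer normal-ordered product $:x^i\,\ov{x_i}:$ (respectively $:\theta(x^i)\,\ov{x_i}:$), summed in $i$. The single-contraction terms reduce, by (4.14) applied term by term, to $-\sum_i :(x^i-\theta(x^i))\ov{[x_i,a]}:$ for (4.17), the $-\theta(x^i)$ contribution coming precisely from the $\theta$-part of (4.12). The quadratic-in-$\l$ correction is contributed by the double-contraction integral of Wick together with the $\Omega_\g$-piece of (4.12)--(4.14); after invariance turns everything into $\ov{\Omega_\g(a)}$ with appropriate combinatorial weights, the two contributions combine to yield the coefficient $\tfrac{3}{2}\l$. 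For (4.18), the pieces involving $x^i$ are absent and only the $\Omega_\g(a)$-contribution survives.

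The identities (4.19) and (4.20) are equalities between elements of $V^{k,1}(\g)$ with no free $\l$. I would prove (4.19) by expanding $\theta(x_i)$ and $\theta(x^i)$ and applying the quasi-associativity formula for normal ordering to move the outer factor across; the swap is controlled by the invariance relation $\sum_i x^i\otimes x_i=p(\cdot)\sum_i x_i\otimes x^i$ after a change of dual basis, and any correction terms coming from quasi-associativity collapse using (4.14). Identity (4.20) follows from (4.19) by substituting $x^i\mapsto\theta(x^i)$ and noting that a reindexing shows $\sum_i:\theta(x^i)\theta(x_i):$ must equal its own negative in the super sense: expanding $\theta(x^i)\theta(x_i)$ and relabelling the dummy basis indices produces, by the skew-super-symmetry of $[\ov a_\l \ov b]$, a sum that coincides with minus itself and hence vanishes.

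The main obstacle throughout is sign bookkeeping in the super setting: every application of Wick introduces a parity factor $p(a,b)$, and the two reindexing identities for the dual basis, namely $\sum_i[a,x^i]\otimes x_i=-p(a,x^i)\sum_i x^i\otimes[a,x_i]$ and its $\ov\g$-analogue, are invoked repeatedly in slightly different guises. A sign error in one of these will propagate coherently through several formulas, so the most delicate part is ensuring consistent conventions when passing between $\g$ and $\ov\g$ via the identification and when interchanging order of factors under the normal product.
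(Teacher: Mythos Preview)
Your approach is correct and largely parallels the paper's, but the paper takes several shortcuts you may find instructive. For \eqref{altheta1} and \eqref{altheta2} the paper simply refers back to the computation already done in the proof of Proposition~\ref{isomorfi}, which is exactly the Wick expansion you describe. For \eqref{altheta3} and \eqref{altheta4} the paper does not redo Wick but invokes sesquilinearity of the $\lambda$-bracket applied to \eqref{altheta1} and \eqref{altheta2}; this is quicker than your ``analogous computation.'' The most notable economy is in \eqref{altheta6}: rather than running Wick again, the paper observes from \eqref{altheta4} and \eqref{seconda} that $[\ov a_\lambda(b-\theta(b))]=0$, and since $\theta(a)$ is built entirely from barred elements this forces $[\theta(a)_\lambda(b-\theta(b))]=0$, whence \eqref{altheta6} reduces immediately to \eqref{altheta3}. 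Your direct Wick computation for \eqref{altheta6} works too, but the paper's route avoids repeating the Jacobi/invariance manipulation.

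For \eqref{altheta5} and \eqref{altheta7} both you and the paper proceed via Wick and the auxiliary identity $\sum_i:\theta(x^i)\ov{[x_i,a]}:=\sum_i:\theta([a,x^i])\ov x_i:$. For \eqref{altheta8} the paper appeals to the quasi-commutativity formula (1.39) of \cite{KacD} together with \eqref{altheta1}, rather than expanding $\theta$ and using quasi-associativity as you propose; either works, but the paper's version is a one-liner. For \eqref{altheta9} the paper does exactly the reindexing argument you sketch, showing the sum equals its own negative; your passing reference to \eqref{altheta8} in deriving \eqref{altheta9} is unnecessary and slightly misleading, since the argument is a direct symmetry computation independent of \eqref{altheta8}.
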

\begin{proof}Formulas \eqref{altheta1} and \eqref{altheta2} have been proven in the proof of Proposition \ref{isomorfi}. Formulas \eqref{altheta3} and \eqref{altheta4} are obtained by applying   sesquilinearity of the
$\l$-bracket to \eqref{altheta1} and \eqref{altheta2}. From \eqref{altheta4} and \eqref{seconda} one derives that 
\begin{equation}\label{tildelbar}
[\ov a_\l (b-\theta(b))]=[(b-\theta(b))_\l\ov a]=0,
\end{equation}
hence $[\theta(a)_\l (b-\theta(b))]=0$. This implies \eqref{altheta6}.
 Using Wick's formula \eqref{Wick},  \eqref{altheta3}, and \eqref{altheta4} we get
$$
[\theta(a)_\l \sum_i:x^i\ov x_i:]=\sum_i(:\theta([a,x^i])\ov x_i:+p(a,x_i):x^i\ov{[a,x_i]}:)+\frac{3}{2}\l\ov{\Omega_\g(a)}.
$$
Note that 
\begin{equation}\label{thetatheta}\sum_i:\theta(x^i)\ov{[x_i,a]}:=\sum_i:\theta([a,x^i])\ov x_i:
\end{equation} so \eqref{altheta5} follows. Likewise 
$$
[\theta(a)_\l \sum_i:\theta(x^i)\ov x_i:]=\sum_i(:\theta([a,x^i])\ov x_i:+p(a,x_i):\theta(x^i)\ov{[a,x_i]}:)+\frac{3}{2}\l\ov{\Omega_\g(a)}.
$$
 so, by \eqref{thetatheta}, \eqref{altheta7} follows as well. For \eqref{altheta8}, it is enough to apply formula (1.39) of \cite{KacD} and \eqref{altheta1}. Finally, 
\begin{align*}
\sum_i:\theta(x^i)\theta(x_i):&=\tfrac{1}{2}\sum_{i,r}:\theta(x^i):\ov{[x_i,x_r]}\ov x^r::=\tfrac{1}{2}\sum_{i,r,s}:\theta(x^i)([x_i,x_r],x^s):\ov x_s\ov x^r::\\
&=\tfrac{1}{2}\sum_{i,r,s}:\theta(x^i)(x_i,[x_r,x^s]):\ov x_s\ov x^r::=\tfrac{1}{2}\sum_{r,s}:\theta([x_r,x^s]):\ov x_s\ov x^r::
\end{align*}
and
\begin{align*}
\sum_{r,s}:\theta([x_r,x^s]):\ov x_s\ov x^r::&=\sum_{r,s}-p(x_r,x^s)p(\ov x_s, \ov x_r):\theta([x^s,x_r]):\ov x^r\ov x_s::\\
&=-\sum_{r,s}p(x^s)p( x_r):\theta([x^s,x_r]):\ov x^r\ov x_s::\\
&=-\sum_{r,s}:\theta([x_s,x^r]):\ov x_r\ov x^s::.
\end{align*}
so \eqref{altheta9} holds.
\end{proof}

We start our computation of $[G_\l G]$. First observe that, by \eqref{Gabar},
\begin{equation}\label{[Gxx]}\	\sum_i[G{}_\la  :x^i\ov x_i:]=\sum_i :
x^i  x_i:+k\sum_i:T(\overline
x_i)\overline  x^i:+k\frac{\la^2}{2}\sdim\g.
\end{equation} 
Next we compute $[G_\l \sum_i :\theta(x^i)\ov x_i:]$. By \eqref{diractilde}, \eqref{altheta5}, and \eqref{altheta7},
$$
[\theta(a)_\l G]=-\sum_i:(x^i-\theta(x^i))\ov{[x_i,a]}:+\frac{1}{2}\l\ov{\Omega_\g(a)}
$$
and, by sesquilinearity,
\begin{equation}\label{Gltheta2}
[G_\l \theta(a) ]=p(a)\sum_i:(x^i-\theta(x^i))\ov{[x_i,a]}:+p(a)\frac{1}{2}(\l+T)(\ov{\Omega_\g(a)})
\end{equation}
By Wick's formula, \eqref{Gltheta2}, and \eqref{Gabar},
\begin{align*}
&[G_\l \sum_i :\theta(x^i)\ov x_i:]=\sum_{i,j}::(x^j-\theta(x^j))\ov{[x_j,x_i]}:\ov x^i:)\\
&+\frac{1}{2}\sum_ip(x^i)(\l+T):\ov{\Omega_\g(x^i)}\ov x_i:+ \sum_i:\theta(x^i) x_i:\\
&+\sum_{i,j}\int_0^\l [:(x^j-\theta(x^j))\ov{[x_j,x_i]}:_\mu\ov x^i]d\mu\\
&+\frac{1}{2}\sum_ip(x^i)\int_0^\l [(\l+T)(\ov{\Omega_\g(x^i)})_\mu\ov x_i]d\mu).
\end{align*}

Let us compute the terms of the above sum one by one. By formula  (1.40) of \cite{KacD}, and \eqref{tildelbar} above, we have
\begin{align*}
&\sum_{i,j}::(x^j-\theta(x^j))\ov{[x_j,x_i]}:\ov x^i:
=\sum_{i,j}:(x^j-\theta(x^j)):\ov{[x_j,x_i]}\ov x^i::\\
&+\sum_{i,j}\int_0^Td\l:(x^j-\theta(x^j))[\ov{[x_j,x_i]}_\l \ov x^i]:\\
&=2\sum_j:(x^j-\theta(x^j))\theta(x_j):-\sum_{i,j}\int_0^Td\l:(x^j-\theta(x^j))([x^i,x_i],x_j):\\
&=2\sum_j:(x^j-\theta(x^j))\theta(x_j):
\end{align*}
Note that $\sum_ip(x^i):\ov{\Omega_\g(x^i)}\ov x_i:=0$. Indeed 
\begin{align*}
\sum_ip(x_i):\ov{\Omega_\g(x^i)}\ov x_i:&=\sum_i:\ov{\Omega_\g(x_i)}\ov x^i:=\sum_{i,r}(\Omega_\g(x_i),x^r):\ov x_r\ov x^i:\\
&=\sum_{i,r}(x_i,\Omega_\g(x^r)):\ov x_r\ov x^i:=\sum_{r}:\ov x_r\ov{\Omega_\g(x^r)}:\\
&=\sum_{r}p(\ov x_r):\ov{\Omega_\g(x^r)}\ov x_r:=-\sum_{r}p( x_r):\ov{\Omega_\g(x^r)}\ov x_r
\end{align*}
Using formula (1.38) of \cite{KacD} and \eqref{tildelbar} above we see that
$$
\sum_{i,j}\int_0^\l [:(x^j-\theta(x^j))\ov{[x_j,x_i]}:_\mu\ov x^i]d\mu=\sum_{i,j}\int_0^\l (x^j-\theta(x^j))(x_j,[x^i,x_i])d\mu=0
$$
Finally, since $\Omega_\g-2gId$ is nilpotent, it has zero supertrace, hence $$\sum_ip(x^i)\int_0^\l [(\l+T)(\ov{\Omega_\g(x^i)})_\mu\ov x_i]d\mu=\sum_ip(x^i)\int_0^\l \l(x_i,\Omega_\g(x^i))d\mu=g\l^2 \sdim\g.
$$

It follows that
\begin{align*}
[G_\l \sum_i :\theta(x^i)\ov x_i:]=2\sum_j:(x^j-\theta(x^j))\theta(x_j):+ \sum_i:\theta(x^i) x_i:+\frac{g}{2}\l^2 \sdim\g\end{align*}
Using \eqref{altheta8} and \eqref{altheta9}, we can conclude that 
$$
[G_\l \sum_i :\theta(x^i)\ov x_i:]=3\sum_i:x^i\theta(x_i):- \frac{3}{2}\sum_i:\theta(x^i)\theta(x_i):+\frac{g}{2}\l^2 \sdim\g,
$$
Combining this with \eqref{[Gxx]}, the final outcome is that
\begin{equation}\label{[GG]}[G{}_\la  G]=\sum_i :( x^i-\theta(x^i))
(x_i-\theta(x_i)):+k\sum_i:T(\overline
x_i)\overline  x^i:+\frac{\la^2}{2}(k-\frac{2g}{3})\sdim\g.
\end{equation}

Since $x-\theta(x)=\tilde x-\tfrac{1}{4k}C_\g(x)=\tilde x-\tfrac{1}{4k}\widetilde{C_\g(x)}$ we have that
\begin{align*}
\sum_i :( x^i-\theta(x^i))
(x_i-\theta(x_i)):&=\sum_i :(\tilde x^i-\tfrac{1}{4k}C_\g(x^i))
(\tilde x_i-\tfrac{1}{4k}C_\g(x_i)):\\
&=\sum_i :\tilde x^i\tilde x_i:-\tfrac{1}{2k}\sum_i:\widetilde{C_\g(x^i)}\tilde x_i:.
\end{align*}
We used the fact, that, since $C_\g$ is symmetric, $\sum_i:\widetilde{C_\g(x^i)}\tilde x_i:=\sum_i:\tilde x^i\widetilde{C_\g(x_i)}:$ and, since $C_\g^2=0$, $\sum_i:\widetilde{C_\g(x^i)}\widetilde{C_\g(x_i)}:=0$.
Thus \eqref{[GG]} can be rewritten as 
\begin{equation}\label{GGtilde}[G_\g{}_\la  G_\g]=\sum_i (:\tilde 
x^i\tilde  x_i:-\tfrac{1}{2k}:\widetilde{C_\g(x^i)}\tilde x_i:+k:T(\overline
x_i)\overline  x^i:)+\frac{\la^2}{2}(k-\frac{2g}{3})\sdim\g.
\end{equation}
Identifying 
$V^{k,1}(\g)$ with $V^{k-g}(\g)\otimes F(\ov \g)$ we  have that 
 \eqref{GGtilde}
 can be rewritten as
\begin{equation}\label{gquad}[G_\g{}_\la  
G_\g]=2L^\g\otimes\va-\tfrac{1}{2k}\sum_i:C_\g(x^i) x_i:\otimes|0\rangle+2k\va\otimes
L^{\ov\g}+\tfrac{\la^2}{2}(k-\tfrac{2}{3}g)\sdim{}\g.
\end{equation}

Recall that, given a highest weight representation $M$ of $L'(\g,\s)$, we constructed a $\s\otimes\tau$-twisted representation $X=X(M)$ of $V^{k,1}(\g)$. Setting $(G_\g)^X_n= (G_\g)^X_{(n+1/2)}$, we can write the field $Y^X(G_\g,z)$
as 
$$
Y^X(G_\g,z)=\sum_{n\in\ganz}G^X_n z^{-n-\frac{3}{2}}.
$$

Using the fact that $(G^X_0)^2=\tfrac{1}{2}[G^X_0,G^X_0]$ and
\eqref{gquad},  we have 
\begin{equation}(G^X_0)^2\!=\!(L^\g\!-\tfrac{1}{4k}\sum_i:C_\g(x^i) x_i\!:)^M_{(1)}\otimes I_{CW(\ov\g)}- 
kI_M\otimes
(L^{\ov\g})^{CW(\ov\g)}_{(1)}-\!\!
\tfrac{1}{16}(k-\tfrac{2}{3}g)(\sdim\g)\, I_X\label{gzeronuovo}.
\end{equation}

From now on we will write $a_{(n)}$ instead of $a^V_{(n)}$ when there is no risk of confusion for the twisted representation $V$.
\begin{lemma}\label{thetadot1} In $CW(\ov \g)$, if $x\in\g^{\ov s}$ and $n>0$, we have $\theta(x)_{(n)}\cdot1=0$.
\end{lemma}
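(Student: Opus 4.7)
The plan is to expand $\theta(x)_{(n)}\cdot 1$ via the mode formula for a normally ordered product in a twisted module, and then argue that for $n>0$ each resulting summand kills the vacuum of $CW(\ov\g)$. Starting from $\theta(x)=\tfrac12\sum_i :\ov{[x,x_i]}\ov x^i:$ and applying the general $(n)$-mode analogue of formula (3.4) of \cite{KMP} to the $\tau$-twisted module $CW(\ov\g)$, one writes $(:\ov{[x,x_i]}\ov x^i:)_{(n)}\cdot 1$ as a sum of bilinear terms of the form $\ov{[x,x_i]}_{(m_1)}\ov x^i_{(m_2)}\cdot 1$ (in either order, with the appropriate sign) subject to $m_1+m_2=n-1$, together with correction terms coming from the singular part of the OPE and involving $\ov{[x,x_i]}_{(r)}\ov x^i$ for $r\in\ganz_+$.

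The correction terms vanish at once: by \eqref{terza}, $[\ov{[x,x_i]}_\lambda\ov x^i]=(x^i,[x,x_i])\ov K$ is $\lambda$-independent, so only the $r=0$ product survives, giving $\ov{[x,x_i]}_{(0)}\ov x^i=(x^i,[x,x_i])|0\rangle$. The corresponding contribution to $\theta(x)_{(n)}\cdot 1$ is a scalar multiple of $|0\rangle^{CW(\ov\g)}_{(n-1)}\cdot 1=\delta_{n,0}\cdot 1$, and hence vanishes whenever $n>0$.

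For the main bilinear terms the argument is a clean index count. The graded commutator $[\ov y_{(m_1)},\ov z_{(m_2)}]_\pm=\delta_{m_1+m_2,-1}(z,y)$ vanishes for $n>0$, since $m_1+m_2=n-1\ne -1$; hence the two orderings differ only by a sign when applied to $1$, and it suffices to show that at least one of the two operators annihilates the vacuum. By the construction of $CW(\ov\g)$, the mode $\ov a_{(m)}$ annihilates $1$ whenever $t^m\otimes\ov a\in L^+$, which covers every $m>-\tfrac12$ (and also $m=-\tfrac12$ provided $\ov a\in A^+$). If one had both $m_1\le-\tfrac12$ and $m_2\le-\tfrac12$, then $m_1+m_2\le -1$, contradicting $m_1+m_2=n-1>-1$. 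Therefore at least one $m_i$ strictly exceeds $-\tfrac12$, the corresponding operator annihilates $1$, and every summand of $\theta(x)_{(n)}\cdot 1$ is zero.

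The most delicate point will be to write down the twisted-module $(n)$-mode formula for $(:ab:)$ with its correction terms correctly (generalizing (3.4) of \cite{KMP} from $n=1$ to arbitrary $n$); once this expansion is at hand, the lemma reduces to the elementary observation $m_1+m_2>-1$ for $n>0$.
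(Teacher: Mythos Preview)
Your argument is correct and follows essentially the same route as the paper: expand $\theta(x)_{(n)}$ via the twisted normal-ordering formula (3.4) of \cite{KMP}, observe that the correction term is a multiple of $|0\rangle_{(n-1)}=0$, and show that in each bilinear term at least one mode has index strictly greater than $-\tfrac12$ and hence kills $1$. Your treatment of the bilinear terms is in fact a mild streamlining of the paper's: rather than handling separately the two ranges $m<s+s_i-\tfrac12$ and $m\ge s+s_i-\tfrac12$ dictated by the split in (3.4) (and then swapping the remaining boundary term when $s=s_i=0$), you note once and for all that the graded commutator vanishes because $m_1+m_2=n-1\neq -1$, so the ordering is irrelevant and the conclusion follows from the single inequality $m_1+m_2>-1$. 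One small remark: formula (3.4) of \cite{KMP} already gives the mode expansion of $:ab:_{(n)}$ for arbitrary $n$, so no generalization from the case $n=1$ is needed.
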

\begin{proof}
Choose the basis $\{x_i\}$ of $\g$, so that $x_i\in\g^{\ov s_i}$. We can clearly assume $s,s_i\in[0,1)$.
We apply formula (3.4) of \cite{KMP} to get
\begin{align*}
\theta(x)_{(n)}&\!=\!\!\sum_{i,m< s+s_i-\tfrac{1}{2}}\ov{[x,x_i]}_{(m)}\ov x^i_{(n-m-1)}-p(x,x_i)p(x)p(x_i)\!\!\sum_{i,m\ge s+s_i-\tfrac{1}{2}}\ov x^i_{(n-m-1)}\ov{[x,x_i]}_{(m)}\\&-\sum_i\binom{s+s_i-\tfrac{1}{2}}{1}[\ov{[x,x_i]}_{(0)}\ov x^i]_{(n-1)}.
\end{align*}
If $m< s+s_i-\tfrac{1}{2}$, then $n-m-1>n-s-s_i-\tfrac{1}{2}$. Since $n\in \ov s$, $n>0$ and $s\in[0,1)$, we have $n-s\ge 0$. Thus $n-m-1>-s_i-\tfrac{1}{2}$. Since $s_i\in[0,1)$ and $n-m-1\in -\ov s_i+\tfrac{1}{2}$, we see that $n-m-1\ge-s_i+\tfrac{1}{2}>-\tfrac{1}{2}$. It follows that 
$\ov x^i_{(n-m-1)}\cdot 1=0$. If $m>s+s_i-\tfrac{1}{2}$, then $m>-\tfrac{1}{2}$ so $\ov{[x,x_i]}_{(m)}\cdot 1=0$. Since $[\ov{[x,x_i]}_{(0)}\ov x^i]=(x^i,[x,x_i])|0\rangle$, we see that, since $n>0$, $[\ov{[x,x_i]}_{(0)}\ov x^i]_{(n-1)}=0$.
We therefore obtain that
\begin{align*}
\theta(x)_{(n)}&=-p(x,x_i)p(x)p(x_i)\sum_{i}\ov x^i_{(n-s-s_i-\tfrac{1}{2})}\ov{[x,x_i]}_{(s+s_i-\tfrac{1}{2})}.
\end{align*}
If $s>0$ or $s_i>0$ then $\ov{[x,x_i]}_{(s+s_i-\tfrac{1}{2})}\cdot 1=0$, so we can assume $s=0$ and get that
\begin{align*}
\theta(x)_{(n)}&=-p(x,x_i)p(x)p(x_i)\sum_{i:s_i=0}\ov x^i_{(n-\tfrac{1}{2})}\ov{[x,x_i]}_{(-\tfrac{1}{2})}=-\sum_{i:s_i=0}\ov{[x,x_i]}_{(-\tfrac{1}{2})}\ov x^i_{(n-\tfrac{1}{2})}.
\end{align*}
Observing that, since $n>0$, $\ov x^i_{(n-\tfrac{1}{2})}\cdot 1=0$ we get the claim.
\end{proof}

\begin{lemma}\label{kostant} In $CW(\ov \g)$ we have that
\begin{align*}
\sum_{i,j:s_i=s_j=0}p(\ov{[x^i,x_j]},\ov x^j)(\ov x^j)_{(-\tfrac{1}{2})}(\ov{[x^i,x_j]})_{(-\tfrac{1}{2})}(\ov x_i)_{(-\tfrac{1}{2})}\cdot1=6(\ov h_{\rho_0})_{(-\tfrac{1}{2})}\cdot1.
\end{align*}
\end{lemma}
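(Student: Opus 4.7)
The plan is to evaluate the triple Clifford product in $CW(\ov\g)$ by iterated use of the super-commutation relations, combined with the triangular decomposition of $\g^{\ov 0}$ provided by Proposition \ref{triangular}. This is the super/twisted analog of Kostant's classical identity showing that the cubic part of his Dirac operator acts on the spinor vacuum as a multiple of $\rho$.

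I would first choose a homogeneous basis $\{x_i\}_{s_i=0}$ of $\g^{\ov 0}$ adapted to the triangular decomposition $\g^{\ov 0}=\n^0_-\oplus\h^0\oplus\n^0$: for each $\alpha\in\D^0_+$, bases of $\n^0_\alpha$ and $(\n^0_-)_{-\alpha}$ dual with respect to $(\cdot,\cdot)$, together with a basis of $\h^0$ adapted to the isotropic polarization $\h^0=\h^+\oplus\h^-$. By Lemma \ref{isotropic}, the chosen maximal isotropic subspace of $\g^{\ov 0}$ is $A^+=\h^++\n^0$, so $(\ov a)_{(-1/2)}\cdot 1 = 0$ if and only if $a\in A^+$.

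The central manipulation uses the mode super-commutation relation in $CW(\ov\g)$
\[(\ov a)_{(-1/2)}(\ov b)_{(-1/2)} - p(\ov a,\ov b)(\ov b)_{(-1/2)}(\ov a)_{(-1/2)} = (b,a)\,I,\]
which follows from $[\ov a_\lambda\ov b]=(b,a)\ov K$ and the general mode commutator formula. The sign $p(\ov{[x^i,x_j]},\ov x^j)$ in the summand is precisely the sign acquired when moving $(\ov x^j)_{(-1/2)}$ past $(\ov{[x^i,x_j]})_{(-1/2)}$; since the $p$-signs are involutive, the two signs multiply to $1$, and the sum rewrites as
\[S = \sum_{i,j}(\ov{[x^i,x_j]})_{(-1/2)}(\ov x^j)_{(-1/2)}(\ov x_i)_{(-1/2)}\cdot 1 \;+\; \sum_{i,j}p(\ov{[x^i,x_j]},\ov x^j)([x^i,x_j],x^j)(\ov x_i)_{(-1/2)}\cdot 1.\]
The scalar correction is analyzed via super-invariance: $\sum_j ([x^i,x_j],x^j) = (x^i,\sum_j [x_j,x^j])$, which is a super-trace-like expression on $\g^{\ov 0}$; the evaluation in the triangular basis gives a controlled multiple of $h_{\rho^{\ov 0}}$ or vanishes by the dual-basis symmetry.

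For the remaining triple Clifford term, I would iterate the same relation, classifying summands according to where each basis vector sits. A summand contributes nontrivially only when $x_i\notin A^+$; then one of the internal super-commutators, most importantly $[(\ov{[x^i,x_j]})_{(-1/2)},(\ov x_i)_{(-1/2)}]=(x_i,[x^i,x_j])\,I$ and $[(\ov x^j)_{(-1/2)},(\ov x_i)_{(-1/2)}]=(x_i,x^j)\,I=\delta_{ij}\,I$, collapses the triple Clifford product to a single Clifford creation on the vacuum. Aggregating these contributions by the identity $\sum_{\alpha\in\D^0_+}(\sdim\n^0_\alpha)h_\alpha=2h_{\rho^{\ov 0}}$, which is the definition of $\rho_0=\rho^{\ov 0}$ from \eqref{rhos}, together with the combinatorial factor $3$ arising from the three symmetric positions of the cubic expression, yields the claimed $6(\ov h_{\rho_0})_{(-1/2)}\cdot 1$. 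The main obstacle is the careful super-sign bookkeeping and verifying that the genuine three-fermion contributions cancel globally; this requires systematic case analysis over whether each basis vector lies in $\n^0$, $\h^0$, or $\n^0_-$ and over parities.
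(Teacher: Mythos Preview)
Your proposal is correct and follows essentially the same route as the paper: choose a basis of $\g^{\ov 0}$ adapted to the triangular decomposition, then iteratively apply the Clifford commutation relation to push factors past one another, collapsing the triple product to single creation modes and summing the resulting contributions via the definition of $\rho_0$. The paper carries this out by first restricting to $x_i\in\b^0_-=\h^0\oplus\n^0_-$ (so that $(\ov x_i)_{(-1/2)}\cdot 1\ne 0$), showing that your first scalar correction actually \emph{vanishes} (rather than contributing a multiple of $h_{\rho_0}$), picking up $2(\ov h_{\rho_0})_{(-1/2)}\cdot 1$ from the $\delta_{ij}$ contraction, and then reducing the residual triple term to $4(\ov h_{\rho_0})_{(-1/2)}\cdot 1$ by two further swaps and the observation that $[x^j,x^i]\in\b^0_-$ forces $x^i,x^j\in\h^0$; this is precisely the ``genuine three-fermion contributions cancel'' step you flag as the main obstacle.
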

\begin{proof}Clearly we can choose the basis $\{x_i\}$ of $\g^{\ov 0}$ to be homogeneous with respect to the triangular decomposition $\g^{\ov 0}=\n^0\oplus\h^0\oplus \n^0_-$. We can also assume that the $x_i$ are $\h^0$-weight vectors. Let $\mu_i$ be the weight of $x_i$. Set $\b^0=\h^0\oplus \n^0$ and $\b^0_-=\h^0\oplus \n^0_-$. Then
\begin{align*}
\sum_{i,j:s_i=s_j=0}&p(\ov{[x^i,x_j]},\ov x^j)(\ov x^j)_{(-\tfrac{1}{2})}(\ov{[x^i,x_j]})_{(-\tfrac{1}{2})}(\ov x_i)_{(-\tfrac{1}{2})}\cdot1\\
&=\sum_{i,j:x_i\in\b^0_-,s_j=0}p(\ov{[x^i,x_j]},\ov x^j)(\ov x^j)_{(-\tfrac{1}{2})}(\ov{[x^i,x_j]})_{(-\tfrac{1}{2})}(\ov x_i)_{(-\tfrac{1}{2})}\cdot1\\
&=\sum_{i,j:x_i\in\b^0_-,s_j=0}(\ov{[x^i,x_j]})_{(-\tfrac{1}{2})}(\ov x^j)_{(-\tfrac{1}{2})}(\ov x_i)_{(-\tfrac{1}{2})}\cdot1\\
&+
\sum_{i,j:x_i\in\b^0_-,s_j=0}p(\ov{[x^i,x_j]},\ov x^j)([x^i,x_j], x^j)(\ov x_i)_{(-\tfrac{1}{2})}\cdot1
\end{align*}
Since $[x_j,x^j]\in\h^0$, we have that
\begin{align*}
&\sum_{i,j:x_i\in\b^0_-,s_j=0}p(\ov{[x^i,x_j]},\ov x^j)([x^i,x_j], x^j)(\ov x_i)_{(-\tfrac{1}{2})}\cdot1=\\&-\sum_{i,j:s_i=s_j=0}(x^i,[x^j, x_j])(\ov x_i)_{(-\tfrac{1}{2})}\cdot1=0.
\end{align*}
It follows that
\begin{align*}
\sum_{i,j:s_i=s_j=0}&p(\ov{[x^i,x_j]},\ov x^j)(\ov x^j)_{(-\tfrac{1}{2})}(\ov{[x^i,x_j]})_{(-\tfrac{1}{2})}(\ov x_i)_{(-\tfrac{1}{2})}\cdot1\\
&=\sum_{i,j:x_i\in\b^0_-,s_j=0}(\ov{[x^i,x_j]})_{(-\tfrac{1}{2})}(\ov x^j)_{(-\tfrac{1}{2})}(\ov x_i)_{(-\tfrac{1}{2})}\cdot1\\
&=\sum_{i,j:x_i,\in\b^0_-,s_j=0}p(\ov x_i, \ov x_j)(\ov{[x^i,x_j]})_{(-\tfrac{1}{2})}(\ov x_i)_{(-\tfrac{1}{2})}(\ov x^j)_{(-\tfrac{1}{2})}\cdot1\\
&+\sum_{i:x_i\in\b^0_-}(\ov{[x^i,x_i]})_{(-\tfrac{1}{2})}\cdot1
\end{align*}
Since $\sum_{i:x_i\in\b^0_-}(\ov{[x^i,x_i]})_{(-\tfrac{1}{2})}\cdot1=\sum_{i:x_i\in\n_-}p(x_i)(\ov h_{-\mu_i})_{(-\tfrac{1}{2})}\cdot1=2(\ov h_{\rho_0})_{(-\tfrac{1}{2})}\cdot1$, we need only to check that
\begin{equation}\label{hrho}
\sum_{i,j:x_i,\in\b^0_-,s_j=0}p(\ov x_i, \ov x_j)(\ov{[x^i,x_j]})_{(-\tfrac{1}{2})}(\ov x_i)_{(-\tfrac{1}{2})}(\ov x^j)_{(-\tfrac{1}{2})}\cdot1=4(\ov h_{\rho_0})_{(-\tfrac{1}{2})}\cdot 1.
\end{equation}
Now
\begin{align*}
&\sum_{i,j:x_i,\in\b^0_-,s_j=0}p(\ov x_i, \ov x_j)(\ov{[x^i,x_j]})_{(-\tfrac{1}{2})}(\ov x_i)_{(-\tfrac{1}{2})}(\ov x^j)_{(-\tfrac{1}{2})}\cdot1=\\
&\sum_{i,j:x_i,\in\b^0_-,s_j=0}p( x_i)(\ov{[x^j,x^i]})_{(-\tfrac{1}{2})}(\ov x_i)_{(-\tfrac{1}{2})}(\ov x_j)_{(-\tfrac{1}{2})}\cdot1=\\
&\sum_{i,j:x_i,x_j\in\b^0_-}p( x_i)(\ov{[x^j,x^i]})_{(-\tfrac{1}{2})}(\ov x_i)_{(-\tfrac{1}{2})}(\ov x_j)_{(-\tfrac{1}{2})}\cdot1=\\
&\sum_{i,j:x_i,x_j\in\b^0_-}p( x_i)(\ov x_i)_{(-\tfrac{1}{2})}(\ov x_j)_{(-\tfrac{1}{2})}(\ov{[x^j,x^i]})_{(-\tfrac{1}{2})}\cdot1\\
&+\sum_{i,j:x_i,x_j\in\b^0_-}p( x_i)( x_i,[x^j,x^i])(\ov x_j)_{(-\tfrac{1}{2})}\cdot1\\
&+\sum_{i,j:x_i,x_j\in\b^0_-}p( x_i)p(\ov{[x^j,x^i]},\ov x_i)(\ov x_i)_{(-\tfrac{1}{2})}( x_j,[x^j,x^i])\cdot1.
\end{align*}
Since $[x^j,x^i]\in \b^0_-$ only when $x^j,x^i\in\h^0$, we see that
$$\sum_{i,j:x_i,x_j\in\b^0_-}p( x_i)(\ov x_i)_{(-\tfrac{1}{2})}(\ov x_j)_{(-\tfrac{1}{2})}(\ov{[x^j,x^i]})_{(-\tfrac{1}{2})}\cdot1=0.
$$
Moreover both 
$$\sum_{i,j:x_i,x_j\in\b^0_-}p( x_i)( x_i,[x^j,x^i])(\ov x_j)_{(-\tfrac{1}{2})}\cdot1
$$ and
$$\sum_{i,j:x_i,x_j\in\b^0_-}p( x_i)p(\ov{[x^j,x^i]},\ov x_i)(\ov x_i)_{(-\tfrac{1}{2})}( x_j,[x^j,x^i])\cdot1
$$ 
are equal to $\sum_{x_i\in\b^0_-}(\ov{ [x^i,x_i]})_{(-\tfrac{1}{2})}\cdot1=2(\ov h_{\rho_0})_{(-\tfrac{1}{2})}\cdot1$. This proves \eqref{hrho}, hence the statement.

\end{proof}
\section{The very strange formula}
 We are interested in calculating 
$G^X_0(v_\Lambda\otimes 1)$, $v_\Lambda$ being a highest weight vector 
of a 
$L'(\g,\s)$-module $M$ with highest weight $\L$ such that $\L(K)=k-g$.  

Since $\s$ preserves the form $(\cdot,\cdot)$, we have that $\s\Omega_\g=\Omega_\g\s$. It follows that $\Omega_\g$ stabilizes $\g^{\ov j}$ for any $j$. Recall, furthermore, that $\Omega_\g(\g)$ is contained in the radical of the form restricted to $[\g,\g]$. In particular $\Omega_\g(\g)\subset \h$. We can therefore choose the maximal isotropic subspace $\h^+$ of $\h^0$ so that $\Omega_\g(\g^{\ov 0})\subset \h^+$. With this choice we are now ready to prove the following result.

\begin{prop}\label{azioneg0}  \begin{equation}\label{0} G^X_0(v_\Lambda\otimes 1)=v_\Lambda\otimes 
(\ov h_{\ov\Lambda+\rho_\s})_{(-\tfrac{1}{2})}\cdot  1.
\end{equation}
\end{prop}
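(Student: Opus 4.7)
The plan is to split $G_\g$ via \eqref{diractilde} as
$G_\g = \sum_i:x^i\ov x_i:-\tfrac{2}{3}\sum_i:\theta(x^i)\ov x_i:$,
apply the mode $(G_\g)^X_{(1/2)}$ (which equals $G^X_0$ by the convention $G^X_n=(G_\g)^X_{(n+1/2)}$) to $v_\Lambda\otimes 1$, and expand each normal ordered product using formula (3.4) of \cite{KMP}. Two collapsing principles will reduce the sums drastically: on $M$, the vector $v_\Lambda$ is killed by $\n'$ and zero modes $h_{(0)}$ for $h\in\h^0$ act by $\Lambda(h)$; on $CW(\ov\g)$, the vector $1$ is killed by every mode in $L^+$, in particular by $(\ov x_i)_{(-1/2)}$ whenever $x_i\in\h^+\oplus\n^0$.

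For the linear piece, a typical summand $(x^i)_{(m)}v_\Lambda\otimes(\ov x_i)_{(-m-1/2)}\cdot 1$ survives only if both factors are nonzero, which combines the highest weight and vacuum conditions to force $m=0$ and $s_i=0$. After then pairing $\{x^i,x_i\}$ against the polarization of $\g^{\ov 0}$ determined by $L^+$, only indices with $x^i\in\h^+$ contribute, and the dual basis structure of $\h^0$ delivers precisely $v_\Lambda\otimes(\ov h_{\ov\Lambda})_{(-1/2)}\cdot 1$.

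For the cubic piece the whole result lies in $v_\Lambda\otimes CW(\ov\g)$, since $\theta(\g)\subset F(\ov\g)$. Lemma \ref{thetadot1} annihilates the positive $\theta(x^i)$-modes acting on $1$, and what remains is a triple fermionic expression. The $s_i=0$ contribution is precisely the quantity evaluated in Lemma \ref{kostant}; together with the prefactor $-2/3$ from \eqref{diractilde} it produces the $j=0$ summand of $\rho_\s$, with coefficient $1=1-2\cdot 0$ as in \eqref{rhosigma}. The twisted sectors $s_i=j\in(0,\tfrac{1}{2}]$ are handled by an analogous argument in which the binomial corrections $\binom{-s_i}{r+1}$ in formula (3.4) of \cite{KMP}, together with parity signs and partial traces over $\g^{\ov j}$, assemble to produce exactly the $(1-2j)$-weighted $(\ov h_{\rho^{\ov j}})_{(-1/2)}\cdot 1$ contribution. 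Summing all sectors and invoking \eqref{rhosigma} gives \eqref{0}.

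The main obstacle will be the twisted-sector version of Lemma \ref{kostant}: while the $s_i=0$ case already requires a delicate reordering of three fermion modes via the relations of $C(\ov\g)$, the twisted case demands simultaneous management of fractional mode indices, binomial corrections from (3.4) of \cite{KMP}, and super parity signs, with the resulting weight having to match $1-2j$ exactly. The hypothesis $\Omega_\g(\g^{\ov 0})\subset\h^+$, arranged just before the statement, is the key ingredient ensuring that potential central contributions drop out so that the formula for $\rho_\s$ emerges cleanly; this parallels the untwisted even case of \cite[Section 6]{KMP} but now relies essentially on the triangular decomposition from Proposition \ref{triangular}.
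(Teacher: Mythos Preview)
Your treatment of the linear piece has a genuine gap. You take a summand of $:x^i\ov x_i:^X_{(1/2)}(v_\Lambda\otimes 1)$ to have the factored form $(x^i)_{(m)}v_\Lambda\otimes(\ov x_i)_{(-m-1/2)}\cdot 1$, with the mode of $x^i$ acting on the $M$-factor alone. But $X(M)=M\otimes CW(\ov\g)$ is a module for $V^{k,1}(\g)$ \emph{through the isomorphism of Proposition~\ref{isomorfi}}, and under that isomorphism it is $\tilde x^i$, not $x^i$, that corresponds to $x^i\otimes 1\in V^{k-g}(\g)\otimes F(\ov\g)$. Since $x^i=\tilde x^i+\theta(x^i)-\tfrac{1}{4k}C_\g(x^i)$, the mode $(x^i)^X_{(m)}$ has a nontrivial Clifford-Weil component coming from $\theta(x^i)$. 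A correct computation of your linear piece therefore gives $v_\Lambda\otimes(\ov h_{\ov\Lambda+3\rho_\s})_{(-1/2)}\cdot 1$ (modulo the $C_\g$-correction, which is precisely where the hypothesis $\Omega_\g(\g^{\ov 0})\subset\h^+$ enters), while the cubic piece with prefactor $-\tfrac{2}{3}$ contributes $-2(\ov h_{\rho_\s})_{(-1/2)}\cdot 1$, not $+(\ov h_{\rho_\s})_{(-1/2)}\cdot 1$. The two errors happen to cancel, but neither intermediate claim is correct, and in your scheme the hypothesis on $\h^+$ is never actually invoked.

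The paper avoids this by first rewriting
\[
G=\sum_i:\tilde x^i\ov x_i:+\tfrac{1}{3}\sum_i:\theta(x^i)\ov x_i:-\tfrac{1}{4k}\sum_i:\tilde x^i\ov{C_\g(x_i)}:\,,
\]
so that $\tilde x^i$ genuinely acts on the $M$-factor and the first term yields $v_\Lambda\otimes(\ov h_{\ov\Lambda})_{(-1/2)}\cdot 1$; the last term vanishes because $C_\g(x_i)\in\h^+$ for $x_i\in\g^{\ov 0}$; and the middle term (coefficient $+\tfrac{1}{3}$) is evaluated on $1\in CW(\ov\g)$ via Lemmas~\ref{thetadot1} and~\ref{kostant}. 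Note also that in the paper's computation the twisted-sector contribution to $\sum_i:\theta(x^i)\ov x_i:_{(1/2)}\cdot 1$ is \emph{not} a triple-fermion expression analogous to Lemma~\ref{kostant}: it arises entirely from the single-commutator correction $\tfrac{3}{2}\sum_i s_i\ov{[x^i,x_i]}_{(-1/2)}$ produced by the binomial terms in formula~(3.4) of \cite{KMP}, and is then identified via \eqref{sumsixixi}. No twisted analogue of the triple-product reordering in Lemma~\ref{kostant} is needed.
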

\begin{proof} Since $C_\g$ is symmetric, we can rewrite $G_0^X$ as $$G_0^X=\sum_i :\tilde x^i\ov x_i: _{(\tfrac{1}{2})}+\tfrac{1}{3} \sum_i :\theta(x^i)\ov x_i:_{(\tfrac{1}{2})}-\tfrac{1}{4k}\sum_i :\tilde x^i\ov{C_\g(x_i)} :_{(\tfrac{1}{2})}.
$$
With easy calculations  one proves that 
\begin{equation}\label{Kquadratico}
\sum_{i}:\tilde x^i  \overline 
x_i:_{(\half)}(v_\Lambda\otimes1)=v_\Lambda\otimes 
(\ov h_{\ov\Lambda})_{(-\frac{1}{2})}\cdot 1.
\end{equation}
Next we observe that, since $C_\g(x)\in \h^+$ when $x\in\g^{\ov 0}$, we have that 
\begin{equation}\label{centralpart}\sum_i :\tilde x^i\ov{C_\g(x_i)} :_{(\tfrac{1}{2})}(v_\L\otimes 1)=0.
\end{equation}
It remains to check the action of $\sum_i:\theta(x^i)\ov x_i:_{(\tfrac{1}{2})}$ on $1$. Choose the basis $\{x_i\}$ of $\g$, so that $x_i\in\g^{\ov s_i}$. We can clearly assume $s_i\in[0,1)$.
We apply formula (3.4) of \cite{KMP} to get
\begin{align*}
\sum_i:\theta(x^i)\ov x_i:_{(\tfrac{1}{2})}&=\sum_{i,m<-s_i}\theta(x^i)_{(m)}(\ov x_i)_{(-m-\tfrac{1}{2})}+\sum_{i,m\ge-s_i}(\ov x_i)_{(-m-\tfrac{1}{2})}\theta(x^i)_{(m)}\\
&-\sum_i\binom{-s_i}{1}\ov{[x^i,x_i]}_{(-\tfrac{1}{2})}.
\end{align*}
If $m<-s_i$ then  $-m>0$ so $(\ov x_i)_{(-m-\tfrac{1}{2})}\cdot 1=0$. Since $s_i\in[0,1)$, if $m>-s_i$ then $m>0$. By Lemma \ref{thetadot1}, $\theta(x^i)_{(m)}\cdot1=0$. Thus
\begin{align*}
\sum_i:\theta(x^i)\ov x_i:_{(\tfrac{1}{2})}&=\sum_{i}(\ov x_i)_{(s_i-\tfrac{1}{2})}\theta(x^i)_{(-s_i)}+\sum_is_i\ov{[x^i,x_i]}_{(-\tfrac{1}{2})}\\
&=\sum_{i}\theta(x^i)_{(-s_i)}(\ov x_i)_{(s_i-\tfrac{1}{2})}+\sum_ip(x_i)\ov{[x_i,x^i]}_{(-\tfrac{1}{2})}+\sum_is_i\ov{[x^i,x_i]}_{(-\tfrac{1}{2})}.
\end{align*}
If $s_i>0$ then $(\ov x_i)_{(s_i-\tfrac{1}{2})}\cdot 1=0$. Observe also that $\sum_ip(x_i)\ov{[x_i,x^i]}_{(-\tfrac{1}{2})}=0$. Thus
\begin{align*}
\sum_i:\theta(x^i)&\ov x_i:_{(\tfrac{1}{2})}=\sum_{i:s_i=0}\theta(x^i)_{(0)}(\ov x_i)_{(-\tfrac{1}{2})}+\sum_is_i\ov{[x^i,x_i]}_{(-\tfrac{1}{2})}.
\end{align*}
Now, applying formula (3.4) of \cite{KMP}, we get
\begin{align*}
&\theta(x^i)_{(0)}=\\&\tfrac{1}{2}\sum_{j,m<s_j-\tfrac{1}{2}}(\ov{[ x^i,x_j]})_{(m)}(\ov x^j)_{(-m-1)}+\tfrac{1}{2}p(\ov{[x^i,x_j]},\ov x^j)\sum_{j,m\ge s_j-\tfrac{1}{2}}(\ov x^j)_{(-m-1)}(\ov{[x^i,x_j]})_{(m)}\\
&-\tfrac{1}{2}\sum_j\binom{s_j-\tfrac{1}{2}}{1}(x^j,[x^i,x_j])I_{CW(\g)}.
\end{align*}
hence
\begin{align*}
\theta(x^i)_{(0)}(\ov x_i)_{(-\tfrac{1}{2})}\cdot1&=\tfrac{1}{2}\sum_{j,m<s_j-\tfrac{1}{2}}(\ov{[ x^i,x_j]})_{(m)}(\ov x^j)_{(-m-1)}(\ov x_i)_{(-\tfrac{1}{2})}\cdot1\\&+\tfrac{1}{2}p(\ov{[x^i,x_j]},\ov x^j)\sum_{j,m\ge s_j-\tfrac{1}{2}}(\ov x^j)_{(-m-1)}(\ov{[x^i,x_j]})_{(m)}(\ov x_i)_{(-\tfrac{1}{2})}\cdot1\\
&-\tfrac{1}{2}\sum_j\binom{s_j-\tfrac{1}{2}}{1}(x^j,[x^i,x_j])(\ov x_i)_{(-\tfrac{1}{2})}\cdot1.
\end{align*}
If $m< s_j-\tfrac{1}{2}$, then $-m-1\ge -s_j+\tfrac{1}{2}>-\tfrac{1}{2}$. It follows that $(\ov x^j)_{(-m-1)}(\ov x_i)_{(-\tfrac{1}{2})}\cdot1=p(\ov x_j, \ov x_i)(\ov x_i)_{(-\tfrac{1}{2})}(\ov x^j)_{(-m-1)}\cdot 1=0$. If $s_j>0$ and $m\ge s_j-\tfrac{1}{2}$ or $s_j=0$ and $m> s_j-\tfrac{1}{2}$, then $(\ov{[x^i,x_j]})_{(m)}(\ov x_i)_{(-\tfrac{1}{2})}\cdot1=p(\ov{[x^i,x_j]},\ov x_i)(\ov x_i)_{(-\tfrac{1}{2})}(\ov{[x^i,x_j]})_{(m)}\cdot1=0$.
Thus
\begin{align*}
\sum_{i:s_i=0}\theta(x^i)_{(0)}(\ov x_i)_{(-\tfrac{1}{2})}\cdot1&=\tfrac{1}{2}\sum_{i,j:s_i=s_j=0}p(\ov{[x^i,x_j]},\ov x^j)(\ov x^j)_{(-\tfrac{1}{2})}(\ov{[x^i,x_j]})_{(-\tfrac{1}{2})}(\ov x_i)_{(-\tfrac{1}{2})}\cdot1\\
&-\tfrac{1}{2}\sum_{i:s_i=0,j}\binom{s_j-\tfrac{1}{2}}{1}(x^j,[x^i,x_j])(\ov x_i)_{(-\tfrac{1}{2})}\cdot1.
\end{align*}
Next we compute
\begin{align*}
&\sum_{i:s_i=0,j}\binom{s_j-\tfrac{1}{2}}{1}(x^j,[x^i,x_j])(\ov x_i)_{(-\tfrac{1}{2})}\cdot1=\\
&-\sum_{i:s_i=0,j}\binom{s_j-\tfrac{1}{2}}{1}p(x^j,x_i)([x^j,x_j],x^i)(\ov x_i)_{(-\tfrac{1}{2})}\cdot1=\\
&-\sum_{i,j}\binom{s_j-\tfrac{1}{2}}{1}([x^j,x_j],x^i)(\ov x_i)_{(-\tfrac{1}{2})}\cdot1=\\
&-\sum_{j}\binom{s_j-\tfrac{1}{2}}{1}\ov{[x^j,x_j]}_{(-\tfrac{1}{2})}\cdot1\\&=-\sum_{j}s_j\ov{[x^j,x_j]}_{(-\tfrac{1}{2})}
.
\end{align*}
The final outcome is that
\begin{align*}
\sum_i:\theta(x^i)\ov x_i:_{(\tfrac{1}{2})}\cdot1&=\tfrac{1}{2}\sum_{i,j:s_i=s_j=0}p(\ov{[x^i,x_j]},\ov x^j)(\ov x^j)_{(-\tfrac{1}{2})}(\ov{[x^i,x_j]})_{(-\tfrac{1}{2})}(\ov x_i)_{(-\tfrac{1}{2})}\cdot1\\&+\tfrac{3}{2}\sum_is_i\ov{[x^i,x_i]}_{(-\tfrac{1}{2})}\cdot1.
\end{align*}
By \eqref{sumsixixi}, we see that 
$$
\sum_{i: 0\le s_i<1}s_i[x^i,x_i]=2\sum_{0<j\le\tfrac{1}{2}}h_{\rho^{\ov j}}.
$$
Combining this observation with  Lemma \ref{kostant}, we see that
$$
\tfrac{1}{3}\sum_i:\theta(x^i)\ov x_i:_{(\tfrac{1}{2})}\cdot1=(\ov h_{\rho_\s})_{(-\tfrac{1}{2})}\cdot 1.
$$
This, together with \eqref{Kquadratico} and \eqref{centralpart}, gives the statement.
\end{proof}

\begin{theorem}\label{sf} Let $\g$ be a basic type Lie superalgebra and let $\s$ be an indecomposable elliptic automorphism preserving the bilinear form. Let $2g$ be the eigenvalue of the Casimir operator in the adjoint representation. Let $\rho_\s$ be defined by \eqref{rhosigma} and $z(\g,\s)$ by \eqref{zgs}. Set $\rho=\rho_{Id}$.
Then we have:
\vskip5pt

 (Strange formula).
\begin{equation}\label{strangeformula}
||\rho||^2=\frac{g}{12}\sdim\g.
\end{equation}

 (Very strange formula).
\begin{equation}\label{vstrangeformula}
||\rho_\s||^2=g(\frac{\sdim\g}{12}-2z(\g,\si))
\end{equation}
\end{theorem}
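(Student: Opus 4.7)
The plan is to compute $(G_0^X)^2(v_\Lambda\otimes 1)$ in two distinct ways and compare.

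On one hand, Proposition \ref{azioneg0} gives $G_0^X(v_\Lambda\otimes 1)=v_\Lambda\otimes(\bar h_\mu)_{(-1/2)}\cdot 1$ with $\mu=\bar\Lambda+\rho_\sigma$. The Clifford--Weyl relation deduced from \eqref{terza}, namely $\{(\bar a)_{(m)},(\bar b)_{(n)}\}=\delta_{m+n,-1}(b,a)I$, specialises at $m=n=-\tfrac12$ and $a=b=h_\mu$ to $((\bar h_\mu)_{(-1/2)})^2=\tfrac12(\mu,\mu)I$. Using super self-adjointness of $G_0^X$ on $X=M\otimes CW(\bar\g)$ with respect to the natural contravariant Hermitian form, I obtain
\[
\langle v_\Lambda\otimes 1,(G_0^X)^2(v_\Lambda\otimes 1)\rangle=\|G_0^X(v_\Lambda\otimes 1)\|^2=\tfrac12(\bar\Lambda+\rho_\sigma,\bar\Lambda+\rho_\sigma).
\]

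On the other hand, \eqref{gzeronuovo} expresses $(G_0^X)^2$ as a sum of Sugawara-like operators and a scalar. Applying Lemma \ref{xixi} with level $\Lambda(K)=k-g$ and denoting by $c$ the $k$-independent scalar $\langle 1,(L^{\bar\g})^{CW(\bar\g)}_{(1)}\cdot 1\rangle$, this gives
\[
(G_0^X)^2(v_\Lambda\otimes 1)=\Bigl[\tfrac12(\bar\Lambda+2\rho_\sigma,\bar\Lambda)+(k-g)z(\g,\sigma)-\tfrac{1}{4k}\bar\Lambda(C_\g(h_{\bar\Lambda}))-kc-\tfrac{1}{16}\bigl(k-\tfrac{2g}{3}\bigr)\sdim\g\Bigr]v_\Lambda\otimes 1.
\]
Matching these two expressions and specialising to $\bar\Lambda=0$ kills the $C_\g$-dependent terms and produces an identity linear in $k$; since $\|\rho_\sigma\|^2$ is $k$-independent, the coefficient of $k$ must vanish (which incidentally determines $c$), and the remaining constant part reads $\tfrac12\|\rho_\sigma\|^2=g\bigl(\tfrac{1}{24}\sdim\g-z(\g,\sigma)\bigr)$, which is precisely \eqref{vstrangeformula} after doubling. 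The strange formula \eqref{strangeformula} then follows by the specialisation $\sigma=\mathrm{Id}$, for which $\rho_\sigma=\rho$ and $z(\g,\sigma)=0$.

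The main technical subtlety I foresee is the super self-adjointness of $G_0^X$ on $X$ and the precise identification $((\bar h_\mu)_{(-1/2)})^2=\tfrac12(\mu,\mu)I$: the construction of the twisted Clifford--Weil module $CW(\bar\g)$ via the maximal isotropic $\h^+\supset\Omega_\g(\g^{\bar 0})$ from Lemma \ref{isotropic} is precisely what guarantees that the contravariant pairing is nondegenerate and compatible with the twisted structure, thereby legitimising the first of the two computations above. Everything else is routine bookkeeping of the scalars produced by \eqref{gzeronuovo} and Lemma \ref{xixi}.
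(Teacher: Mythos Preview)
Your overall strategy---compute $(G_0^X)^2(v_\Lambda\otimes 1)$ via \eqref{gzeronuovo} and Lemma~\ref{xixi} on one side, and via Proposition~\ref{azioneg0} on the other, then separate out powers of $k$---is exactly what the paper does. The difference lies in the choice of $\bar\Lambda$ and, consequently, in how you extract information from Proposition~\ref{azioneg0}.

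The paper takes $\bar\Lambda=-\rho_\sigma$ (i.e.\ $\Lambda=-\rho_\sigma+k\Lambda_0$). Then $\mu=\bar\Lambda+\rho_\sigma=0$, so Proposition~\ref{azioneg0} gives $G_0^X(v_\Lambda\otimes 1)=0$ outright, and hence $(G_0^X)^2(v_\Lambda\otimes 1)=0$ with no further work. Writing $(L^{\bar\g})_{(1)}^{CW(\bar\g)}\cdot 1=\sum_i c_i v_i$ in a basis with $v_0=1$, equation \eqref{gzeronuovo} and Lemma~\ref{xixi} then force $c_i=0$ for $i>0$ and produce a Laurent polynomial in $k$ whose vanishing gives \eqref{vstrangeformula} as the coefficient of $k^0$. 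No Hermitian form, no self-adjointness, no pairing argument is needed.

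Your choice $\bar\Lambda=0$ forces you through the contravariant-form detour, and that is where the genuine gap sits. The paper never constructs a Hermitian form on $X(M)$, never proves $G_0^X$ is super self-adjoint, and in the super/Clifford--Weil setting these are not free: the form $(\cdot,\cdot)$ on $\g$ is not positive definite, the module $CW(\bar\g)$ mixes Clifford and Weyl pieces, and sign conventions for adjoints of odd operators are delicate. You flag this yourself as ``the main technical subtlety,'' but it is not a subtlety---it is an entire additional apparatus you would have to build from scratch, and it is completely avoidable. A second, smaller issue: your displayed expression for $(G_0^X)^2(v_\Lambda\otimes 1)$ tacitly treats $(L^{\bar\g})_{(1)}^{CW(\bar\g)}\cdot 1$ as the scalar $c$, whereas a priori it is only an element of $CW(\bar\g)$; under your pairing scheme only the projection onto $1$ survives, so the equation should be read as an identity of inner products, not of vectors.

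In short: replace $\bar\Lambda=0$ by $\bar\Lambda=-\rho_\sigma$, drop the Hermitian form entirely, and your argument becomes the paper's.
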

\begin{rem} If $\mathfrak z(\g)$ is non-zero, then it contains an 
eigenvector of the Casimir operator with zero eigenvalue, hence $g=0$, and 
the very strange formula amounts to saying that  $\rho_\s$ is isotropic.
\end{rem}
\begin{proof} Let $\{v_i\}_{i\in \ganz_+}$ be a basis of $CW(\ov \g)$ with $v_0=1$. Write $(L^{\ov\g})^{CW(\ov\g)}_{(1)}\cdot 1=\sum_i c_i v_i$. If  $M_0$ is a highest weight module with highest weight 
$\L=-\rho_\si+k\L_0$ then $L^{\ov\g}_{(1)}(v_\L\otimes 1)=\sum c_i (v_\L\otimes v_i)$ with the coefficents $c_i$ that do not depend on $k$. By
Proposition~\ref{azioneg0},  $G_0(v_{\L}\otimes 1)=0$. 
Applying \eqref{gzeronuovo} and
Lemma~\ref{xixi}, we find that
\begin{align*} 
0&=(-\half\Vert\rho_\si\Vert^2+(k-g)z(\g,\si)-\tfrac{1}{4k}\rho_\s(C_\g(h_{\rho_\s}))-\tfrac{1}{16}(k-\frac{2g}{3})\sdim\g)(v_\L\otimes
1)\\
&-\sum_ic_ik(v_\L\otimes v_i).
\end{align*}
Since this equality holds for any $k$, we see that $c_i=0$ if $i>0$. Moreover the coefficient of $v_\L\otimes 1$ must vanish. This coefficient is
{\small
$$
\tfrac{1}{k}\left(-\tfrac{1}{4}\rho_\s(C_\g(h_{\rho_\s}))+k(-\half\Vert\rho_\si\Vert^2-gz(\g,\si)+\tfrac{g}{24}\sdim\g)+k^2(z(\g,\si)-\tfrac{1}{16}\sdim\g-c_0)\right),
$$}
so, again by the genericity of $k$, we obtain
\begin{align}
\rho_\s(C_\g(h_{\rho_\s}))&=0,\label{cgzero}\\
-\half\Vert\rho_\si\Vert^2-gz(\g,\si)+\tfrac{g}{24}\sdim\g&=0,\label{verystrangeformula}\\
z(\g,\si)-\tfrac{1}{16}\sdim\g&=c_0.\label{actionLbar}
\end{align}
Formula \eqref{verystrangeformula} is \eqref{vstrangeformula}  which specializes clearly to \eqref{strangeformula} when $\s=I_\g$.
\end{proof}
As byproduct of the proof of Theorem \ref{sf} we also obtain
\begin{prop}\label{azioneDeG} \ 
\begin{enumerate} 
\item $(L^{\ov\g})^{CW(\ov\g)}_{0}\cdot 1= 
z(\g,\si)-\tfrac{1}{16}\sdim\g$.
\item If $M$ is a highest weight $L(\g)'$-module  with highest 
weight $\L$, then
\begin{align}\notag &(G^X_0)^2(v_\L\otimes
1)=\\\label{gquadro}&\tfrac{1}{2}\left((\ov\L+2\rho_\si,\ov\L)-\tfrac{1}{2k}\ov \L(C_\g(h_{\ov \L}))+\frac{g}{12}\sdim\g-2gz(\g,\si)\right)(v_\L\otimes 
1).\end{align}
\end{enumerate}
\end{prop}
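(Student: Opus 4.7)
The plan is to extract both statements from the operator identity \eqref{gzeronuovo} for $(G^X_0)^2$, combined with Lemma \ref{xixi}, Proposition \ref{azioneg0}, and the polynomial-in-$k$ analysis already performed in the proof of Theorem \ref{sf}. Both assertions arise by plugging a suitable highest weight vector into \eqref{gzeronuovo}; the two parts correspond to specializing $\ov\L=-\rho_\s$ (and invoking genericity in $k$) versus working with arbitrary $\L$ (and substituting the result of part (1)).

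For part (1), I would revisit the specialized identity from the proof of Theorem \ref{sf}: take $v_\L\otimes 1$ with $\ov\L=-\rho_\s$, for which Proposition \ref{azioneg0} gives $G^X_0(v_\L\otimes 1)=0$, hence $(G^X_0)^2(v_\L\otimes 1)=0$. Expanding $(L^{\ov\g})^{CW(\ov\g)}_{(1)}\cdot 1=\sum_i c_i v_i$ in a basis $\{v_i\}$ of $CW(\ov\g)$ with $v_0=1$ and using Lemma \ref{xixi} for the $M$-part, the vanishing turns into a Laurent polynomial identity in the super-affine level $k$ whose coefficients are independent of $k$. Setting each $v_\L\otimes v_i$ coefficient in each power of $k$ to zero forces $c_i=0$ for $i>0$, and the $k^1$ coefficient of $v_\L\otimes 1$ yields \eqref{actionLbar}, namely $c_0=z(\g,\s)-\tfrac{1}{16}\sdim\g$. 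Since $(L^{\ov\g})_0=(L^{\ov\g})_{(1)}$ for a conformal vector of weight two, this gives part (1).

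For part (2), I would apply \eqref{gzeronuovo} to $v_\L\otimes 1$ for a general highest weight $\L$. Lemma \ref{xixi} evaluates the two $M$-side terms as $(L^\g)^M_{(1)}(v_\L)=\tfrac12(\ov\L+2\rho_\s,\ov\L)v_\L+(k-g)z(\g,\s)v_\L$ and $\sum_i:C_\g(x^i)x_i:^M_{(1)}(v_\L)=\ov\L(C_\g(h_{\ov\L}))v_\L$, while part (1) replaces $(L^{\ov\g})^{CW(\ov\g)}_{(1)}\cdot 1$ by the scalar $(z(\g,\s)-\tfrac{1}{16}\sdim\g)\cdot 1$. Collecting contributions, the $z(\g,\s)$ terms telescope as $(k-g)-k=-g$ and the dimensional terms combine as $\tfrac{k}{16}\sdim\g-\tfrac{1}{16}(k-\tfrac{2g}{3})\sdim\g=\tfrac{g}{24}\sdim\g$; factoring out $\tfrac12$ produces the stated expression. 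The only step requiring a bit of care is the bookkeeping of the two roles of $k$ (super-affine level in \eqref{gzeronuovo} versus affine level $\L(K)=k-g$ on $M$ feeding Lemma \ref{xixi}); beyond this, I do not anticipate any genuine obstacle, as the argument is a direct assembly of already-established identities.
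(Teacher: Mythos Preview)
Your proposal is correct and follows essentially the same route as the paper: part (1) is obtained by recalling that $(L^{\ov\g})^{CW(\ov\g)}_{(1)}\cdot 1=c_0$ from the proof of Theorem \ref{sf} together with \eqref{actionLbar}, and part (2) by applying \eqref{gzeronuovo} and Lemma \ref{xixi} to $v_\L\otimes 1$ and substituting part (1). Your explicit bookkeeping of the $z(\g,\s)$ and $\sdim\g$ cancellations is exactly what the paper leaves implicit in the phrase ``Using the first equality we get the second claim.''
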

\begin{proof}We saw in the proof of Proposition \ref{sf} that $(L^{\ov\g})^{CW(\ov\g)}_{0}\cdot 1= 
c_0$ and \eqref{actionLbar} gives our formula for $c_0$. 

Again by \eqref{gzeronuovo} and Lemma~\ref{xixi}, 
\begin{align*} G_0^2(v_\L\otimes 1)&=(\half(\ov
\L+2\rho_\si,\ov\L)-\tfrac{1}{4k}\ov\L(C_\g(\ov \L))+(k-g)z(\g,\si))v_\L\otimes 1\\
&-\tfrac{1}{16}(k-\frac{2g}{3})\sdim\g(v_\L\otimes 
1)v_\L\otimes 1-kv_\L\otimes
(L^{\ov\g})^{CW(\ov\g)}_{0}\cdot 1).
\end{align*} Using the first equality we get the second claim.
\end{proof}

\vskip15pt

\vskip15pt
\footnotesize{

\noindent{\sl Victor Kac}: Department of Mathematics, Rm 2-178, MIT, 77 
Mass. Ave, Cambridge, MA 02139;\\
{\tt kac@math.mit.edu}

\noindent{\sl Pierluigi M\"oseneder Frajria}: Politecnico di Milano, Polo regionale di Como, 
Via Valleggio 11, 22100 Como,
ITALY;\\ {\tt pierluigi.moseneder@polimi.it}

\noindent{\sl Paolo Papi}: Dipartimento di Matematica, Universit\`a di Roma 
``La Sapienza", P.le A. Moro 2,
00185, Roma , ITALY;\\ {\tt papi@mat.uniroma1.it} }

\end{document}